\theoremstyle{plain}
\newtheorem{theorem}{Theorem}[section]
\newtheorem{lemma}[theorem]{Lemma}
\newtheorem{corollary}[theorem]{Corollary}
\newtheorem{proposition}[theorem]{Proposition}
\theoremstyle{definition}
\newtheorem{example}[theorem]{Example}
\newtheorem{assumption}[theorem]{Assumption}
\theoremstyle{remark}
\def\ZZ{\mathbb{Z}}
\def\RR{\mathbb{R}}
\def\NN{\mathbb{N}}
\def\EE{\mathbb{E}}
\def\PP{\mathbb{P}}
\def\FF{\mathbb{F}}
\def\GG{\mathbb{G}}
\def\N{{\cal N}}
\def\F{{\cal F}}
\def\G{{\cal G}}
\def\R{{\cal R}}
\newcommand{\co}{\operatorname{co}}
\newcommand{\sgn}{\operatorname{sgn}}
\newcommand{\argmin}{\operatornamewithlimits{argmin}}
\DeclareMathOperator*{\var}{Var}
\DeclareMathOperator*{\dist}{dist}
\newcommand{\ind}{\mathbbm{1}}
\begin{document}
	
	
	\title{Perturbed Iterate SGD for Lipschitz Continuous Loss Functions with Numerical Error and Adaptive Step Sizes}

	\author{		
	\name{Michael R. Metel\thanks{michael.metel@h-partners.com}}	
	\affil{Huawei Noah's Ark Lab, Montr\'eal, QC, Canada}}

	\maketitle
	
\begin{abstract}
Motivated by neural network training in finite-precision arithmetic environments, this work studies the convergence of perturbed iterate SGD using adaptive step sizes in an environment with numerical error. Considering a general stochastic Lipschitz continuous loss function, an asymptotic convergence result to a Clarke stationary point is proven as well as the non-asymptotic convergence to an approximate stationary point in expectation. It is assumed that only an approximation of the loss function's stochastic gradient can be computed, in addition to error in computing the SGD step itself. 
\end{abstract}
	
\begin{keywords}optimization with numerical error; adaptive step sizes; Lipschitz continuity; SGD 
\end{keywords}


\section{Introduction}
\label{int}
This paper studies the convergence of perturbed iterate stochastic gradient descent (PISGD) using adaptive steps sizes in an environment with numerical error. The assumptions are given in a general form but are motivated by the error from using finite precision arithmetic for neural network training. Given the continuously increasing size of deep learning models, there is a strong motivation to do training in lower-bit formats to enable more efficient training. The majority of research in this area is focused on hardware design using number formats of different precision for different types of data (gradients, weights, etc.) to accelerate training and reduce memory requirements, while aiming to incur minimal accuracy degradation, see \citep[Table 1]{wang2022}. Our work is complementary to this line of research, with a focus on modelling numerical error and attempting to adapt and extend the convergence analysis of PISGD using infinite precision, i.e., in $\RR^d$, to environments with numerical error. 

The convergence analysis, found in Section \ref{perturbedSGD}, focuses on finding an (approximate) stationary point of a function $f: \RR^d\rightarrow \RR\label{eq:0}$ which can be written as $f=\EE[F(\cdot,\bm{\xi})]\label{F}$ for a function $F:\RR^d\times\RR^n\rightarrow \RR$. The function $F(\cdot,\bm{\xi})$ is Lipschitz continuous, with the precise details given in Section \ref{LipFunc}, and $\bm{\xi}\in\RR^n\label{xi}$ is a random vector from a probability space $(\Omega,\F,P)$. Unlike assuming that $F(\cdot,\bm{\xi})$ is convex or that it has a Lipschitz continuous gradient, this assumption is much closer to reality as a wide range of neural network architectures are known to be at least locally Lipschitz continuous \citep{davis2020}. 

In a fixed finite-precision environment, it is not possible in general to prove convergence to a stationary point given that all such points may not even be representable, e.g., all stationary points could be irrational. The presented asymptotic convergence analysis, therefore, implicitly requires that the precision of representable numbers increases through time if it were to be ``implemented", such as by using a sequence of finite-precision environments over an infinite time horizon, with the rounding error decreasing to zero in the limit (see the paragraph before Corollary \ref{non-asy}). 
However, this analysis, culminating in Theorem \ref{convergence}, still allows for computational error, even when working in $\RR^d$, and could be of independent interest. In addition, it serves as the foundation for a non-asymptotic convergence analysis, where as a corollary the convergence is proven to an approximate stationary point in expectation after a predetermined number of iterations, which in principle can be implemented in a single fixed finite-precision environment. Whereas the asymptotic convergence result could be seen as verifying the soundness of our general assumptions and analysis, given the convergence result in the limit to a stationary point, the non-asymptotic convergence result is perhaps more practical.   

These novel convergence results are proven for a class of adaptive step sizes inspired by variants of SGD, such as gradient normalization and gradient clipping. In Section \ref{empirical}, an example of our proposed class of adaptive step sizes is demonstrated on image recognition tasks in fixed-point arithmetic environments. Before these results, an overview of fixed-point arithmetic is given in Section \ref{finiteprec}, past work studying optimization with numerical error is discussed in Section \ref{litreview}, the paper concludes in Section \ref{con}, with a table of notation given in Appendix \ref{notation}. 

\section{Lipschitz Continuous Loss Functions}
\label{LipFunc}
This section contains the required assumptions and resulting properties for $f$. It is assumed that $F(\cdot,\bm{\xi})$ is continuous for each $\bm{\xi}\in \RR^n$, and $F(\bm{w},\cdot)$ is Borel measurable for each $\bm{w}\in \RR^d\label{w}$. For almost all $\bm{\xi}\in\RR^n$, 
\begin{alignat}{6}
	|F(\bm{w},\bm{\xi})-F(\bm{w}',\bm{\xi})|\leq L_0(\bm{\xi})\|\bm{w}-\bm{w}'\|_2\nonumber
\end{alignat}
for all $\bm{w},\bm{w}'\in \RR^d$, where $L_0:\RR^n\rightarrow\RR\label{L0}$ is a measurable function which is square integrable, $Q:=\EE[L_0(\bm{\xi})^2]<\infty\label{Q}$. It follows that $f$ is $L_0:=\EE[L_0(\bm{\xi})]\label{EL0}$-Lipschitz continuous \citep[Proposition 2]{metel2022}. As is common for loss functions used in machine learning, we make the following assumption.
\begin{assumption}\label{pos_loss}
	The loss function is non-negative, $f: \RR^d\rightarrow \RR_{\geq 0}$.
\end{assumption}
\noindent If $\inf\limits_{\bm{w}\in \RR^d} f(\bm{w})\geq -z>-\infty$ for some $z>0$, $f$ can be redefined as $f:=\EE[F(\cdot,\bm{\xi})]+z$ to satisfy Assumption \ref{pos_loss}. Let $B^p_{\epsilon}: \RR^d\rightrightarrows\RR^d$ be the closed $p$-norm ball,  $B^p_{\epsilon}(\bm{w}):=\{\bm{x}\in\RR^d:\|\bm{x}-\bm{w}\|_p\leq\epsilon\}\label{p-closed}$, and in particular let $B^p_{\epsilon}:=B^p_{\epsilon}(\bm{0})\label{p-closed-0}$ for $\epsilon\geq 0$ and $p\geq 1$.

The convergence analysis uses the Clarke $\epsilon$-subdifferential \citep{goldstein1977}$\label{p-n-e-Clarke}$ $\partial^p_{\epsilon} h:\RR^d\rightrightarrows\RR^d$, 
\begin{alignat}{6}
	&&\partial^p_{\epsilon} h(\bm{w}):=\co\{\partial h(\bm{x}): \bm{x}\in B^p_{\epsilon}(\bm{w})\},\nonumber
\end{alignat}
\noindent where $\co$ denotes the convex hull, and $\partial h: \RR^d\rightrightarrows\RR^d\label{Clarke}$ denotes the Clarke subdifferential, which for a locally Lipschitz continuous function $h:\RR^d\rightarrow \RR$ equals
\begin{alignat}{6}
	&&\partial h(\bm{w})=\co\{\bm{v}: \exists \bm{w}^k\rightarrow \bm{w}, \bm{w}^k\in D, \nabla h(\bm{w}^k)\rightarrow \bm{v}\},\label{cldef}
\end{alignat}
where $D$ is the domain of $\nabla h$. The Clarke $\epsilon$-subdifferential is a commonly used relaxation of the Clarke subdifferential for the development and analysis of algorithms for minimizing non-smooth non-convex Lipschitz continuous loss functions. In particular, for any $\epsilon_1,\epsilon_2>0$, algorithms have been developed with 
non-asymptotic convergence guarantees in expectation and with high probability for the approximate stationary point $\dist(\bm{0},\partial^2_{\epsilon_1}f(\bm{w}))\leq \epsilon_2$, see for example \citep{zhang2020,davis2022,metel2022,tian2022}. 

Let $\{\alpha_k\}$ be a positive sequence with $\lim\limits_{k \rightarrow \infty} \alpha_k=0$. The next proposition proves the continuous convergence \citep[Definition 5.41]{rockafellar2009} of the sequence of set-valued mappings $\{\partial^p_{\alpha_k} h\}$ to $\partial h$.
\begin{proposition}
	\label{cont-conv}	
	Let $h:\RR^d\rightarrow \RR$ be a locally Lipschitz continuous function. The sequence of mappings $\{\partial^p_{\alpha_k} h\}$ converges continuously to $\partial h$.
\end{proposition}

\begin{proof}
	The proof uses \citep[Proposition 5.49 (a)]{rockafellar2009} and \citep[Inequality 4(13)]{rockafellar2009}. Consider any $\bm{w}\in\RR^d$ and $\epsilon>0$. For any $\alpha_k>0$, the Pompeiu-Hausdorff distance \citep[Example 4.13]{rockafellar2009} between $\partial h(B^p_{\alpha_k}(\bm{w})):=\{\partial h(\bm{x}): \bm{x}\in B^p_{\alpha_k}(\bm{w})\}$ and $\partial h(\bm{w})$ with respect to the chosen $p$-norm equals
	\begin{alignat}{6}
		&d^p_{\infty}(\partial h(B^p_{\alpha_k}(\bm{w})),\partial h(\bm{w}))\nonumber\\
		=&\inf\{\gamma\geq 0: \partial h(B^p_{\alpha_k}(\bm{w}))\subseteq \partial h(\bm{w})+ B^p_{\gamma}(\bm{w}), \partial h(\bm{w})\subseteq \partial h(B^p_{\alpha_k}(\bm{w}))+ B^p_{\gamma}(\bm{w})\}\nonumber\\
		=&\inf\{\gamma\geq 0: \partial h(B^p_{\alpha_k}(\bm{w}))\subseteq \partial h(\bm{w})+ B^p_{\gamma}(\bm{w})\}.\nonumber
	\end{alignat} 	
	\noindent By the outer semicontinuity of $\partial h$ \citep[Proposition 2.1.5 (d)]{clarke1990}, there exists a $\delta>0$, such that $\partial h(B^p_{\delta}(\bm{w}))\subseteq \partial h(\bm{w})+B^p_{\epsilon}(\bm{w})$, and by the definition of $\{\alpha_k\}$, there exists a $K\in\NN$ such that for $i\geq K$, $\alpha_{i}\leq \frac{\delta}{2}$. For all $\bm{x}\in B^p_{{\alpha_{K}}}(\bm{w})$, 
	$\partial h(B^p_{\alpha_i}(\bm{x}))\subseteq
	\partial h(B^p_{\delta}(\bm{w}))$ by the triangle inequality, hence 
	$\partial h(B^p_{\alpha_i}(\bm{x}))\subseteq \partial h(\bm{w})+B^p_{\epsilon}(\bm{w})$ and 
	$d^p_{\infty}(\partial h(B^p_{\alpha_i}(\bm{x})),\partial h(\bm{w}))\leq \epsilon$. Given that $\partial h(\bm{w})$ and $B^p_{\epsilon}(\bm{w})$ are convex sets, by taking the convex hull of both sides, it also holds for $i\geq K$ and $\bm{x}\in B^p_{{\alpha_{K}}}(\bm{w})$ that   
	$\partial^p_{\alpha_i}h(\bm{x})\subseteq\partial h(\bm{w})+B^p_{\epsilon}(\bm{w})$ \citep[Theorem 1.1.2]{schneider2014}, proving that   
	$\{\partial^p_{\alpha_k} h\}$ converges continuously to $\partial h$.	
\end{proof}

\noindent It is not assumed that $f$ nor $F(\cdot,\bm{\xi})$ are differentiable. We instead define $\widetilde{\nabla} F:\RR^d\times\RR^n\rightarrow \RR^d\label{approx_grad}$ to be a Borel measurable function which equals $\nabla F$ almost everywhere it exists. This can be computed using back propagation for a wide range of neural network architectures made up of elementary functions, see \citep[Proposition 3 \& Theorem 2]{bolte2020} for more details.

In the convergence analysis in Section \ref{perturbedSGD}, iterate perturbation is used with samples of a random variable $\bm{u}:\Omega\rightarrow\RR^d\label{uni-u}$ which is uniformly distributed over $B^{\infty}_{\alpha}$ for an $\alpha>0\label{alpha}$, denoted as $\bm{u}\sim U(B^{\infty}_{\alpha})$. Let $f_{\alpha}:=\EE[f(\cdot+\bm{u})]\label{f-alpha}$ for $\bm{u}\sim U(B^{\infty}_{\alpha})$ be the expected value of the perturbed function $f$. Some useful properties are now listed.
\begin{proposition}{\citep[Propositions 3 \& 6]{metel2022} \&  \citep[Lemma 4.2]{metel2021}}
	\label{old_props}	
	\begin{enumerate}
		\item For any $\bm{w}\in\RR^d$ and $\alpha>0$, with $\bm{u}\sim U(B^{\infty}_{\alpha})$, $\EE[\widetilde{\nabla} F(\bm{w}+\bm{u},\bm{\xi})]=\nabla f_{\alpha}(\bm{w})$ and  
		\item $\nabla f_{\alpha}$ is $L_1^{\alpha}:=\alpha^{-1}\sqrt{d}L_0\label{f-alpha-Lip}$-Lipschitz continuous. 
		\item For almost all $(\bm{w},\bm{\xi})\in\RR^{d+n}$, $\|\widetilde{\nabla} F(\bm{w},\bm{\xi})\|_2\leq L_0(\bm{\xi})$. 
	\end{enumerate}
\end{proposition}

\noindent The following proposition will also be needed, connecting $\nabla f_{\alpha}$ with the $L_{\infty}$-norm Clarke $\alpha$-subdifferential of $f$.
\begin{proposition}
	\label{inclusion}	
	For all $\bm{w}\in\RR^d$ and $\alpha>0$, it holds that 
	$\nabla f_{\alpha}(\bm{w})\in \partial^{\infty}_{\alpha} f(\bm{w})$.
\end{proposition}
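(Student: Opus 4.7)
The plan is to express $\nabla f_{\alpha}(w)$ as the uniform integral average of $\nabla f(x)$ over $x\in\overline{B}^{\infty}_{\alpha/2}(w)$, and then recognize this average as lying in the closed convex set $\partial^{\infty}_{\alpha/2} f(w)$ because almost every integrand point already sits inside it. The main technical content is a routine Leibniz interchange; the rest is bookkeeping from the definition of $\partial^{\infty}_{\alpha/2}$ and a standard fact about integrals into convex sets.

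First, I would note that the assumption $\EE[L_0(\xi)^2]<\infty$ yields $\EE[L_0(\xi)]<\infty$ by Cauchy--Schwarz, so passing Lipschitz bounds through expectation shows that $f$ is globally Lipschitz on $\RR^d$ with constant $L_0:=\EE[L_0(\xi)]$; Rademacher's theorem then gives differentiability of $f$ a.e. Writing
\[ f_{\alpha}(w)=\frac{1}{\alpha^d}\int_{[-\alpha/2,\,\alpha/2]^d} f(w+u)\,du, \]
I would apply the classical differentiation-under-the-integral rule. For each fixed $w$, the set of $u$ where $f$ is not differentiable at $w+u$ has Lebesgue measure zero (translation of the Rademacher null set), and wherever the derivative exists one has $\|\nabla f(w+u)\|_2\leq L_0$, supplying a uniform integrable dominating function. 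Thus
\[ \nabla f_{\alpha}(w)=\frac{1}{\alpha^d}\int_{\overline{B}^{\infty}_{\alpha/2}(w)} \nabla f(x)\,dx. \]

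Next, for a.e. $x\in\overline{B}^{\infty}_{\alpha/2}(w)$ the singleton $\{\nabla f(x)\}$ coincides with $\partial f(x)$, and by definition $\partial f(x)\subseteq \co\{\partial f(x'):x'\in\overline{B}^{\infty}_{\alpha/2}(w)\}=\partial^{\infty}_{\alpha/2} f(w)$. The displayed identity therefore exhibits $\nabla f_{\alpha}(w)$ as a probability average (with respect to the uniform measure on the $\ell^{\infty}$ box) of a measurable selection valued almost everywhere in $\partial^{\infty}_{\alpha/2} f(w)$. Since $\partial^{\infty}_{\alpha/2} f(w)$ is convex by construction and compact by Proposition~\ref{eps-grad}, and since any Bochner-type integral of such a selection lies in the containing closed convex set (a standard separation argument: every supporting hyperplane of the set bounds the integrand, hence bounds the integral), we conclude $\nabla f_{\alpha}(w)\in\partial^{\infty}_{\alpha/2} f(w)$.

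The only step that requires any care is the Leibniz interchange, which I expect to handle in a few lines via dominated convergence on difference quotients of $f(w+u)$ along each coordinate direction, using $L_0$ as the dominating bound. Everything else follows directly from Rademacher, the definition of the enlarged subdifferential, and the convexity/compactness supplied by Proposition~\ref{eps-grad}.
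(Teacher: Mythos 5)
Your proposal is correct and follows essentially the same route as the paper: Rademacher plus Clarke's Proposition 2.1.2/2.2.2 gives $\nabla f(w+u)\in\partial f(w+u)\subseteq\partial^{\infty}_{\alpha/2}f(w)$ for a.e.\ $u$, the identity $\nabla f_{\alpha}(w)=\EE[\widetilde{\nabla}f(w+u)]$ (which the paper cites from prior work rather than re-deriving via the Leibniz interchange) expresses $\nabla f_{\alpha}(w)$ as an average of such selections, and compactness/convexity of $\partial^{\infty}_{\alpha/2}f(w)$ closes the argument. One harmless inaccuracy: your claim that $\partial f(x)=\{\nabla f(x)\}$ for a.e.\ $x$ is false for general Lipschitz functions (the Clarke subdifferential can fail to be a singleton even at points of differentiability), but only the inclusion $\nabla f(x)\in\partial f(x)$ is needed, so the proof is unaffected.
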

\begin{proof}
	Let $\widetilde{\nabla}f$ be a Borel measurable function equal to $\nabla f$ almost everywhere it exists, see \citep[Example A.1]{metel2021} for a method of its construction. It holds that $\nabla f(\bm{w}+\bm{u})\in \partial f(\bm{w}+\bm{u})$ when $f$ is differentiable at $\bm{w}+\bm{u}\in\RR^d$ \citep[Proposition 2.2.2]{clarke1990}, which is for almost all $\bm{u}\in B^{\infty}_{\alpha}$ by Rademacher's theorem. It follows that for almost all $\bm{u}\in B^{\infty}_{\alpha}$, $\widetilde{\nabla} f(\bm{w}+\bm{u})\in  \partial^{\infty}_{\alpha}f(\bm{w})$, hence 
	$\EE[\widetilde{\nabla} f(\bm{w}+\bm{u})]\in \partial^{\infty}_{\alpha}f(\bm{w})$ since $\partial^{\infty}_{\alpha}f(\bm{w})$ is convex and compact \citep[Proposition 2.3]{goldstein1977}. The result holds given that $\nabla f_{\alpha}=\EE[\widetilde{\nabla} f(\cdot+\bm{u})]$ \cite[Proposition 3]{metel2022}.
\end{proof}

\section{Fixed-point Arithmetic Environments}
\label{finiteprec}

In this work, numerical error is considered in a general form, but to show the applicability of our modelling assumptions, examples are given using fixed-point arithmetic. This is the simplest number format approximating $\RR$, providing a clear view of its induced rounding error, as well as non-negligible numerical error for our empirical analysis. Floating-point arithmetic has traditionally been the dominant number format for scientific computing, which in simplified terms, provides an individual scale factor for each number.
Motivated by AI model training and inference, much attention has been given to block floating-point arithmetic, where subsets of numbers share the same scale, benefiting from an accuracy close to floating-point with reduced hardware complexity and energy consumption similar to fixed-point number formats, which has been further generalized by the Microscaling specification \cite{mxdoc}, supported by several industry leaders. 

We denote a general fixed-point arithmetic environment as $\FF\subset \RR\label{fi-e}$ when further specification is not required. For $m,n\in\ZZ_{\geq 0}$, with $m\leq n$, let $[n]_m:=[m,...,n]\label{int-seq}$, and in particular let $[n]:=[n]_1\label{int-seq-1}$. Following \citep{gupta2015}, all $y\in\FF$ are represented in the form of 
\begin{alignat}{6}
	[e_re_{r-1}(...)e_1.d_1d_2(...)d_t],\label{fixp}
\end{alignat}
written in radix complement \citep[Page 1408]{weik2001}, using $r\in\ZZ_{\geq 0}\label{fi-r}$ digits to represent the integer part and $t\in\ZZ_{\geq 0}\label{fi-t}$ digits to represent the fractional part of $y$, with $r+t>0$. Using a base $\beta\in\ZZ_{> 1}\label{base}$, $e_i\in[\beta-1]_0\label{fixed-int}$ for all $i\in [r]$ and $d_i\in[\beta-1]_0\label{fixed-frac}$ for all $i\in[t]$. 

For any $\FF$, let $\Lambda^-$\label{f-s}, $\lambda\label{f-sp}$, and $\Lambda^+\label{f-l}$ denote the smallest, the  smallest positive, and the largest representable numbers, respectively, with its range defined as $\R_{\FF}:=\{x\in\RR: \Lambda^-\leq x\leq \Lambda^+\}\label{range_FF}$. Two forms of rounding will be considered: round to nearest and stochastic rounding. Given an $x\in\R_{\FF}$, let $\lfloor x\rfloor_{\FF}:=\max\{y\in \FF: y\leq x\}\label{x-floor}$ and $\lceil x\rceil_{\FF}:=\min\{y\in \FF: y\geq x\}\label{x-ceil}$, and let $R:\RR\rightarrow \FF\label{x-round}$ denote a function which performs one of the two rounding methods. When rounding an $x\in\R_{\FF}$ using round to nearest,   
\begin{alignat}{6}
	R(x)\in\argmin\limits_{y\in\{\lfloor x\rfloor_{\FF},\lceil x\rceil_{\FF}\}}|y-x|\nonumber. 
\end{alignat}
If $\lceil x\rceil_{\FF}-x=x-\lfloor x\rfloor_{\FF}$, this work does not depend on the use of a specific tie-breaking rule, but we assume that it is deterministic, such as round to even or away \citep[Section 4.3.1]{fp_paper}. For stochastic rounding, 
\begin{alignat}{6}
	R(x):=\begin{cases}
		\lceil x\rceil_{\FF} &\text{with probability }p=\frac{x-\lfloor x\rfloor_{\FF}}{\lceil x\rceil_{\FF}-\lfloor x\rfloor_{\FF}}\\
		\lfloor x\rfloor_{\FF}&\text{with probability }1-p.\\
	\end{cases}\label{sto_ro}
\end{alignat}
Considering the error $\delta:=R(x)-x$, it is well known that $\EE[\delta]=0$, e.g., \cite[Lemma 5.1]{connolly2021}. We also require a bound on its variance. 
\begin{proposition}
	\label{storoundprop}	
	For an $x\in\R_{\FF}$, it holds that 
	\begin{alignat}{6}
		\EE[\delta]=0\quad\text{and}\quad\var(\delta)=\EE[\delta^2]\leq\frac{\beta^{-2t}}{4}.\nonumber
	\end{alignat}
\end{proposition}

\begin{proof}
	Letting $\omega:=\lceil x\rceil_{\FF}-\lfloor x\rfloor_{\FF}$, $\kappa:=x-\lfloor x\rfloor_{\FF}$, and noting that $\lceil x\rceil_{\FF}-x=\omega-\kappa$,  
	\begin{alignat}{6}
		\var[\delta]&=\EE[\delta^2]-\EE[\delta]^2\nonumber\\
		&=(\lceil x\rceil_{\FF} -x)^2\frac{x-\lfloor x\rfloor_{\FF}}{\lceil x\rceil_{\FF}-\lfloor x\rfloor_{\FF}}+(\lfloor x\rfloor_{\FF}-x)^2(1-\frac{x-\lfloor x\rfloor_{\FF}}{\lceil x\rceil_{\FF}-\lfloor x\rfloor_{\FF}})\nonumber\\
		&=(\omega-\kappa)^2\frac{\kappa}{\omega}+\kappa^2\frac{\omega-\kappa}{\omega}\nonumber\\	
		&=\frac{\kappa}{\omega}(\omega^2-2\omega\kappa+\kappa^2+\kappa\omega-\kappa^2)\nonumber\\
		&=\kappa\omega-\kappa^2\label{tomax}\\
		&\leq\frac{\omega^2}{2}-\frac{\omega^2}{4}=\frac{(\lceil x\rceil_{\FF}-\lfloor x\rfloor_{\FF})^2}{4}=\frac{\beta^{-2t}}{4},\nonumber
	\end{alignat}
	where the inequality holds given that $\kappa
	=\frac{\omega}{2}$ maximizes the strongly concave function \eqref{tomax}, and the final result holds given that from \eqref{fixp}, $\lceil x\rceil_{\FF}-\lfloor x\rfloor_{\FF}=\beta^{-t}$.	
\end{proof}

\noindent When $x\notin\R_{\FF}$, we assume that $R(x)=\argmin\limits_{y\in\{\Lambda^-,\Lambda^+\}}|y-x|\nonumber$ for both rounding methods, which is similar to how overflows are handled when using round towards zero \citep[Section 7.4]{fp_paper}. 

The basic arithmetic operations $\{+,-,\times,\div\}$ applied to $x,y\in \FF$ using round to nearest gives absolute errors bounded by $\{0,0,0.5\beta^{-t},0.5\beta^{-t}\}$, respectively, assuming no overflow \cite[Page 4 \& 5]{wilkinson1965}, 
and when using stochastic rounding, these bounds are increased to $\{0,0,\beta^{-t},\beta^{-t}\}$. Considering now the dot product of two vectors $\bm{x},\bm{y}\in\FF^d$ using stochastic rounding, the rounding error's tail probability can be bounded as follows.

\begin{proposition}
	\label{tailprob}	
	Consider $\bm{x},\bm{y}\in\FF^d$ and their dot-product, $\langle \bm{x},\bm{y}\rangle_{\FF}$, with all operations computed in $\FF$ using stochastic rounding. Let $\delta_{xy}:=\langle \bm{x},\bm{y}\rangle_{\FF}-\bm{x}^T\bm{y}$, and assume no overflow occurs. It holds that 
	\begin{alignat}{6}
	\label{hoeffineq}
	&\PP[\delta_{xy}\geq \tau]\leq \exp\left(\frac{-2\tau^2}{d\beta^{-2t}}\right),
	\end{alignat}
	with the same bound holding for $\PP[\delta_{xy}\leq -\tau]$.
\end{proposition}

\begin{proof}
Following the given absolute error bounds, the computation of $\bm{x}^T\bm{y}$ in $\FF$ can be modelled as 	
$$\bm{x}^T\bm{y}+\sum_{j=1}^d \bm{\delta}^{xy}=\bm{x}^T\bm{y}+\delta_{xy},$$	
where $\bm{\delta}^{xy}_j:=R(\bm{x}_j\bm{y}_j)-\bm{x}_j\bm{y}_j$ and $\delta_{xy}=\sum_{j=1}^d \bm{\delta}^{xy}_j$. Given that $\{\bm{\delta}^{xy}_j\}$ are independent random variables, with $\lfloor\bm{x}_j\bm{y}_j\rfloor-\bm{x}_j\bm{y}_j\leq\bm{\delta}^{xy}_j\leq \lceil\bm{x}_j\bm{y}_j\rceil-\bm{x}_j\bm{y}_j$, and $(\lceil\bm{x}_j\bm{y}_j\rceil-\bm{x}_j\bm{y}_j)-(\lfloor\bm{x}_j\bm{y}_j\rfloor-\bm{x}_j\bm{y}_j)=\beta^{-t}$, using Hoeffding's inequality \cite[Theorem 2]{hoeffding1963}, \eqref{hoeffineq} and the same bound for $\PP[\delta_{xy}\leq -\tau]$ hold.	
\end{proof}

\section{Past Work on Optimization with Numerical Error}
\label{litreview}
Research on optimization in environments with error is vast when considering stochastic optimization. The minimization of a stochastic function with further numerical error seems to be a topic much less explored. We highlight a few papers which were found to be most relevant to the current research. 

An influential paper for this work was \citep{bertsekas2000}, where the convergence of a gradient method of the form $\bm{w}^{k+1}=\bm{w}^k+\eta^k(\bm{s}^k+\hat{\bm{e}}^k)$ was studied, where $\eta^k$ is a step size, $\bm{s}^k$ is a direction of descent, $\hat{\bm{e}}^k$ is a deterministic or stochastic error, and it is assumed that the loss function $f$ has a Lipschitz continuous gradient. It was proven that $f(\bm{w}^k)$ converges, and if the limit is finite, then $\nabla f(\bm{w}^k)\rightarrow \bm{0}$, without any type of boundedness assumptions. 

In \citep{solodov1998}, a parallel projected incremental algorithm onto a convex compact set is proposed for solving finite-sum problems. It is assumed that there is non-vanishing bounded error when computing subgradients $g\in \partial f_i(\bm{w})$ of each subfunction $f_i$, with a convergence result to an approximate stationary point with an error level relative to the error in computing the subgradients. Each subfunction $f_i$ is assumed to be Lipschitz continuous but regular, i.e., its one-sided directional derivative exists and for all $\bm{v}\in\RR^d$ $f_i'(\bm{w};\bm{v})=\max\limits_{\bm{g}\in\partial f_i(\bm{w})}\langle \bm{g},\bm{v}\rangle$ \citep[Section 2.3]{clarke1990}, which precludes functions with downward cusps such as $\min\{1,\max\{0,1-x\}\}$ (see Example \ref{ex_ramp}). 

Recent work studying the convergence of gradient descent for convex loss functions with a Lipschitz continuous gradient in a low-precision floating-point environment is presented in \citep{xia2022}. 
Biased stochastic rounding schemes are proposed which prevent small gradients from being rounded to zero.  Inequalities are then provided involving the step size, the unit roundoff, and the norms of the gradient and iterates which guarantee either a convergence rate to the optimal solution, or at least the (expected) monotonicity of the loss function values.

The paper \citep{yang2019} studies the algorithm $\bm{w}^{k+1}=R(\bm{w}^k-\eta^k\nabla \tilde{f}(\bm{w}^k))$, where $\nabla \tilde{f}(\bm{w}^k)$ is a stochastic gradient and $R$ performs stochastic rounding into a fixed-point arithmetic environment $\FF$.
It is assumed that the loss function $f$ is strongly convex, with Lipschitz continuous gradient and Hessian, with 
$\nabla \tilde{f}(\bm{w}^k)$ being uniformly bounded from $\nabla f(\bm{w}^k)$ for all $k\in \NN$. Convergence to a neighbourhood of the optimal solution is proven which depends on the precision of $\FF$, 
with an improved dependence proven when considering an exponential moving average of iterates computed in full-precision. 

\section{PISGD with Numerical Error and Adaptive Step Sizes}
\label{perturbedSGD}

The PISGD algorithm with adaptive step sizes is first described with infinite precision in order to more easily describe the model with numerical error. Given an initial iterate $\bm{w}^1\in\RR^d$, we consider a perturbed mini-batch SGD algorithm of the form
\begin{alignat}{6}
	\hspace{-1pt}
	\bm{w}^{k+1}=\bm{w}^k-\frac{\hat{\eta}_k\psi_k}{M}\sum_{i=1}^M\widetilde{\nabla} F(\bm{w}^k+\bm{u}^k,\bm{\xi}^{k,i}),\label{eq:11}
\end{alignat}
where the total step size $\eta_k:=\hat{\eta}_k\psi_k\geq 0\label{ss}$, has a deterministic, $\hat{\eta}_k$, and a stochastic, $\psi_k$, component. The value $M\in\NN\label{MB}$ is the mini-batch size, $\bm{u}^k\sim U(B^{\infty}_{\alpha_k})$ is a sample from a uniform distribution with parameter $\alpha_k>0$, and $\{\bm{\xi}^{k,i}\}$ are $M$ samples of $\bm{\xi}$. In order to model PISGD with numerical error we introduce the following notation:
\begin{enumerate}
	\item $\widehat{\nabla} F:\RR^d\times\RR^n\times\RR^s\rightarrow \RR^n\label{asgrad}$;  $(\bm{w},\bm{\xi},\bm{b})\mapsto\widehat{\nabla} F(\bm{w},\bm{\xi},\bm{b})$ is a Borel measurable function which approximates the stochastic gradient $\widetilde{\nabla} F$, where $\bm{b}\in\RR^s\label{b_sr}$ is a discrete random vector used to perform stochastic rounding,	 
	\item $\hat{\bm{u}}^k\in\RR^d\label{u-approx}$ is an approximation of a sample from the continuous distribution $U(B^{\infty}_{\alpha_k})$, and  
	\item $\hat{\bm{e}}^k\in\RR^d\label{e-error}$ is a random vector which models the error from computing the basic arithmetic operations in \eqref{eq:11}. 
\end{enumerate}

\noindent The proposed model of PISGD with numerical error takes the form 
\begin{alignat}{6}
	\hspace{-1pt}
	\bm{w}^{k+1}=\bm{w}^k-\frac{\hat{\eta}_k\psi_k}{M}\sum_{i=1}^M\widehat{\nabla} F(\bm{w}^k+\hat{\bm{u}}^k,\bm{\xi}^{k,i},\bm{b}^{k,i})+\hat{\bm{e}}^k.\label{eq:1}
\end{alignat}

\noindent The sampling of $\hat{\bm{u}}^k$, $\{\bm{\xi}^{k,i}\}$, and $\{\bm{b}^{k,i}\}$ is assumed to be done independently. Let $\{\F_k\}$ be a filtration on the probability space $(\Omega,\F,P)$, where $\F_k:=\sigma(\hat{\bm{u}}^j,\{\bm{\xi}^{j,i}\},\{\bm{b}^{j,i}\},\psi_j,\hat{\bm{e}}^j: j\in[k])\label{F-filt}$, and let $\{\G_k\}$ be a sequence of $\sigma$-algebras, where $\G_k:=\sigma(\hat{\bm{u}}^k,\{\bm{\xi}^{k,i}\},\{\bm{b}^{k,i}\},\psi_k)\label{G-sig}$.
The $\sigma$-algebra $\G_k$ is used to analyze the error $\hat{\bm{e}}^k\in\RR^d$. The algorithm step \eqref{eq:1} can be broken down into two half steps, where at step ``$k+\frac{1}{2}$", all elements of $S^k:=\{\bm{w}^k,\hat{\eta}_k,\psi_k,M,\{\widehat{\nabla} F(\bm{w}^k+\hat{\bm{u}}^k,\bm{\xi}^{k,i},\bm{b}^{k,i})\}\}\label{Sk}$ have been computed, after which $w^{k+1}$ is computed with numerical error $\hat{\bm{e}}^k$ using the elements of $S^k$.  The iterate $\bm{w}^k$ is $\F_{k-1}$-measurable, and all elements within $S^k$ are $\sigma(\F_{k-1},\G_k)$-measurable.

\subsection{Modelling Details of PISGD with Numerical Error and Adaptive Step Sizes}

\subsubsection{Description of $\hat{\bm{u}}^k$}
The original $\bm{u}^k\sim U(B^{\infty}_{\alpha_k})$ is replaced by a sample $\hat{\bm{u}}^k\in\RR^d$ from a probability distribution $\widehat{P}^k\label{P-u}$, where the sequence of probability distributions $\{\widehat{P}^k\}$ and parameters $\{\alpha_k\}$ are assumed to be deterministic. This allows for modelling the approximate sampling of $U(B^{\infty}_{\alpha_k})$ using finite precision, such as through discretization.

\subsubsection{Description of $\bm{b}^{k,i}$}
The inclusion of the random vector $\bm{b}\in\RR^s$ in $\widehat{\nabla} F$ models the use of stochastic rounding. The size $s\in\NN$ of $\bm{b}$ is equal to the number of 
rounding operations required to approximately compute $\widetilde{\nabla} F$, see \cite[Section 7]{croci2022} for an overview of the implementation of stochastic rounding in practice, which generally consists of adding random bits and truncating the result. Another approach sufficient for our model is to sample from a discretized version $\bm{b}_j$ of $\hat{\bm{b}}_j\sim U([0,1])$ and round up if $\bm{b}_j\leq p$ or down otherwise for all $j\in[s]$, following \eqref{sto_ro}. It is assumed that for all $k\in\NN$ and $j\in[s]$, $\bm{b}^k_j\in\RR$ is a discrete uniformly distributed random variable over a finite set $V^k_j\subset \RR$. We denote the distribution of $\bm{b}^k$ as $U(V^k)$, where $V^k:=\{\hat{\bm{b}}: \PP(\bm{b}^k=\hat{\bm{b}})>0\}\label{V-b}$ is the support of  $\bm{b}^k$. In \eqref{eq:1}, the set $\{\bm{b}^{k,i}\}\subset\RR^s$ contains M samples of $\bm{b}^k\sim U(V^k)$. This matches the use of random bits in practice, or a discretization of $[0,1]$ in our model. The support $V^k$ is allowed to change through time to adjust the precision of the stochastic rounding implementation. 

\subsubsection{Modelling the Error of $\widehat{\nabla} F$}

The required accuracy of the perturbed approximate stochastic gradient $\widehat{\nabla} F$ is contained in the following assumption. 

\begin{assumption}\label{approx_assump}
	There exists constants $c_1>0, c_2>0$, and a $K\in\NN$ such that for all $k\geq K$,   
	\begin{alignat}{6}
		\langle \EE[\widehat{\nabla} F(\bm{w}^k+\hat{\bm{u}}^k,\bm{\xi},\bm{b}^k)|\F_{k-1}],\nabla f_{\alpha_k}(\bm{w}^k)\rangle&\geq c_1 \|\nabla f_{\alpha_k}(\bm{w}^k)\|^2_2\quad\text{and} \label{errorineq}\\	
		\EE[\|\widehat{\nabla} F(\bm{w}^k+\hat{\bm{u}}^k,\bm{\xi},\bm{b}^k)\|^2_2|\F_{k-1}]&\leq c_2Q\label{errorupper}
	\end{alignat}
	almost surely, where $\hat{\bm{u}}^k\sim \widehat{P}^k$, $\bm{b}^k\sim U(V^k)$,  $f_{\alpha_k}:=\EE[f(\cdot+\bm{u}^k)]$ for $\bm{u}^k\sim U(B^{\infty}_{\alpha_k})$, and recalling that $Q:=\EE[L_0(\bm{\xi})^2]$.	
\end{assumption}

\noindent Inequalities \eqref{errorineq} and \eqref{errorupper} are variants of classic error assumptions, see \cite[Equations (4.3) \& (4.4)]{levitin1966}, \cite[Equation (1.5)]{bertsekas2000}, and \cite[Equation 4.7]{bottou2018}, tailored to our problem setting. Inequality \eqref{errorineq} states that the conditional expectation of $-\widehat{\nabla} F(\bm{w}^k+\hat{\bm{u}}^k,\bm{\xi},\bm{b}^k)$ must be a direction of descent for $f_{\alpha_k}$ at $\bm{w}^k$ almost surely when $k\in\NN$ is sufficiently large. When $\widehat{\nabla} F(\bm{w},\bm{\xi},\bm{b}^k)=\widetilde{\nabla} F(\bm{w},\bm{\xi})$ for almost all $(\bm{w},\bm{\xi})\in B^{\infty}_{\alpha_k}(\bm{w}^k)\times\RR^n$ and all $\bm{b}^k\in V^k$, and $\hat{\bm{u}}^k\sim U(B^{\infty}_{\alpha_k})$, inequalities \eqref{errorineq} and \eqref{errorupper} are satisfied with $c_1=c_2=1$ from Propositions \ref{old_props}(1) and \ref{old_props}(3). Given that any $0<c_1<1$ and $1<c_2<\infty$ are valid, Assumption \ref{approx_assump} allows the random variable $\widehat{\nabla} F(\bm{w}^k+\hat{\bm{u}}^k,\bm{\xi},\bm{b}^k)$ to be an approximation of $\widetilde{\nabla} F(\bm{w}^k+\bm{u}^k,\bm{\xi})$ with nontrivial error. We note that even when stochastic rounding is used, we cannot assume that the rounding error is unbiased with $c_1=1$. In particular, this negative result holds for the Resnet models \cite{he2016} used in the experiments in Section \ref{empirical}, which use batch normalization \citep{ioffe2015}.	
\begin{proposition}
\label{bnerror}
The expected rounding error from computing batch normalization and its gradient using stochastic rounding is in general non-zero.
\end{proposition}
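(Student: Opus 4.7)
The plan is to produce an explicit arithmetic counterexample, at the level of a few scalar operations, exhibiting a nonzero expected rounding error inside the batch normalization forward pass. The key intuition is that Proposition \ref{storoundprop} only guarantees zero expected error for a \emph{single} basic operation; as soon as a rounded quantity is fed into a \emph{nonlinear} elementary operation, the zero-mean property is destroyed, because the mean of the composition depends on the second moment of the inner rounding error.

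First, I would isolate the squaring step inside the variance computation $\sigma^2 = \frac{1}{B}\sum_{i=1}^B (x_i-\mu)^2$. Pick a finite precision environment $\FF$ and two inputs $x_1,x_2\in\FF$ small enough that $x_i-\mu\notin\FF$, so that the centered value $R(x_i-\mu) = (x_i-\mu)+\delta_i$ has $\EE[\delta_i]=0$ but $\EE[\delta_i^2]>0$ by Proposition~\ref{storoundprop}. By Assumption~\ref{roundingop}, the squaring step then produces $R\bigl(R(x_i-\mu)^2\bigr)$, and using independence of successive stochastic rounding draws I would condition on $R(x_i-\mu)$ to get
\begin{equation*}
\EE\bigl[R(R(x_i-\mu)^2)\bigr]
\;=\;\EE\bigl[R(x_i-\mu)^2\bigr]
\;=\;(x_i-\mu)^2 + \var(\delta_i),
\end{equation*}
which differs from the exact value $(x_i-\mu)^2$ by $\var(\delta_i)>0$. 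Averaging over $i$ and choosing $x_1,x_2$ so that these biases do not cancel with the remaining operations (mean, division by $B$, final rounding to $\FF$) shows that the computed $\sigma^2$ has a strictly positive expected error, which suffices to prove the forward-pass claim.

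Next, I would extend the argument to the gradient. The batch normalization backward pass contains expressions such as $(\sigma^2+\epsilon)^{-3/2}$, $(x_i-\mu)$ times the upstream gradient, and a sum of centered gradients, all of which are polynomial or smooth-nonlinear combinations of the already-biased quantities produced in the forward pass. Since the forward intermediate $\widehat{\sigma}^2$ is strictly biased, and the backward formulas involve, e.g., $\widehat{\sigma}^{-1}$, a second application of Jensen-type reasoning (or simply plugging the biased $\widehat{\sigma}^2$ into the nonlinear map $y\mapsto y^{-1/2}$ and invoking strict convexity on a neighborhood that our numbers fall into) gives a strictly nonzero expected error in at least one gradient component. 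Combined with the bias injected by fresh rounding of each multiplication and division in the backward graph, the overall expected gradient error is nonzero.

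The main obstacle will be bookkeeping: one must verify that the various bias terms introduced at different operations do not miraculously cancel for the chosen example. I plan to sidestep this by keeping $B$ and the numerical values of $x_1,x_2,\gamma,\beta,\epsilon$ very small and explicit (for instance in a binary $\LLL$ with a handful of mantissa bits, or a fixed-point $\II$ with $t=1$), so that all sub-expressions can be enumerated and the expectation computed in closed form. The second subtlety is the assumption of independent rounding draws across operations, which is the standard model of stochastic rounding used implicitly in Assumption~\ref{roundingop} and in Proposition~\ref{storoundprop}; I would state this explicitly before invoking the conditioning step above. No further results beyond Proposition~\ref{storoundprop} and Assumption~\ref{roundingop} are needed.
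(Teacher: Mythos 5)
Your core mechanism is valid and is a genuinely different entry point from the paper's: you locate the bias at the squaring step of the variance, where a zero-mean rounding error $\delta$ composed with $v\mapsto v^2$ yields an exactly computable upward bias of $\var(\delta)>0$ (no irrationality argument needed), whereas the paper engineers inputs ($x_1=2$, $x_2=1$, $\epsilon=0.25$) so that every operation up to and including $\sigma_\beta^2+\epsilon=0.5$ is \emph{exact}, and the only bias arises at the very last nonlinear operation, the division by the stochastically rounded $\sqrt{0.5}$; non-vanishing of that bias is then proved by a rationality/irrationality contradiction using Proposition \ref{rational}. The paper's placement is not incidental: because the bias is injected at the final nonlinearity, the only remaining errors are conditionally zero-mean and the expected output error is read off directly, with no propagation analysis. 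One small imprecision in your setup: under Assumption \ref{roundingop} the subtraction $x_i\ominus\mu$ is exact in any fixed-point $\II$, so to realize ``$x_i-\mu\notin\FF$'' you either need a floating-point $\LLL$ or must source the error $\delta$ from the rounded division in computing $\mu$ itself; either works, but the error then enters both centered values identically (perfectly correlated), which happily only reinforces the bias.

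The genuine gap is in the propagation. Showing that the computed $\sigma^2$ is biased does not by itself prove the proposition: the claimed quantity is the expected error of the batch-normalization \emph{output} (and of its gradient), and a biased intermediate could in principle be neutralized downstream. Your proposed repair via Jensen does not close this: for $g(y)=(y+\epsilon)^{-1/2}$ you get $\EE[g(\widehat{\sigma}^2)]\geq g(\EE[\widehat{\sigma}^2])$ while $g(\EE[\widehat{\sigma}^2])<g(\sigma^2)$ because the bias is upward, so the two inequalities point in opposite directions and yield no conclusion about whether $\EE[g(\widehat{\sigma}^2)]$ equals $g(\sigma^2)$. The same objection applies to your gradient paragraph. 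You acknowledge this and defer to an explicit enumeration of a tiny example, which would indeed settle it, but that enumeration is the entire content of the proof and has not been carried out; until it is, the argument establishes only that an intermediate quantity is biased, not the stated claim. If you want to keep your squaring-based example, the cleanest fix is to borrow the paper's structural trick: arrange the inputs so that the squaring is the \emph{last} operation whose input carries a non-degenerate rounding error, or else redirect the counterexample to a quantity whose exact value is the output itself.
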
 

\begin{proof}
Using the notation of the definition of batch normalization written in \citep[Algorithm 1]{ioffe2015}, consider a mini-batch of size 2, with $x_1=2$, $x_2=1$, $\epsilon=0.25$, $\gamma=1$, and $\beta=0$, and an $\FF$ with $\hat{\FF}:=\{-0.5,0.25,0.5,1,1.5,2,3\}\subseteq\FF$, e.g., base $2$ with $r\geq 3$ and $t\geq 2$. The values $v_i:=x_i-\mu_{\beta}$ for $i\in[2]$ and $z:=\sigma^2_{\beta}+\epsilon$ can be computed exactly with $v_1=z=0.5$. The output for $x_1$ can be written as $y_1=\frac{v_1}{\sqrt{z}+\delta_1}+\delta_2$, where $\delta_1$ is the stochastic rounding error from the square root operation and $\delta_2$ is the subsequent rounding error from the division, with   	
	\begin{alignat}{6}		
		&\EE[y_1]&=&\EE[\EE[\frac{v_1}{\sqrt{z}+\delta_1}+\delta_2|v_1,z,\delta_1]]\nonumber\\
		&&=&\EE[\frac{v_1}{\sqrt{z}+\delta_1}+\EE[\delta_2|v_1,z,\delta_1]]\nonumber\\
		&&=&\EE[\frac{v_1}{\sqrt{z}+\delta_1}]\nonumber\\
		&&=&\frac{v_1}{\lceil\sqrt{z}\rceil_{\FF}}\frac{\sqrt{z}-\lfloor \sqrt{z}\rfloor_{\FF}}{\lceil \sqrt{z}\rceil_{\FF}-\lfloor \sqrt{z}\rfloor_{\FF}}+\frac{v_1}{\lfloor\sqrt{z}\rfloor_{\FF}}(1-\frac{\sqrt{z}-\lfloor \sqrt{z}\rfloor_{\FF}}{\lceil \sqrt{z}\rceil_{\FF}-\lfloor \sqrt{z}\rfloor_{\FF}})\nonumber\\		
		&&=&\frac{v_1}{\lceil \sqrt{z}\rceil_{\FF}-\lfloor \sqrt{z}\rfloor_{\FF}}
		(\frac{\sqrt{z}-\lfloor \sqrt{z}\rfloor_{\FF}}{\lceil \sqrt{z}\rceil_{\FF}}+\frac{\lceil \sqrt{z}\rceil_{\FF}-\sqrt{z}}{\lfloor \sqrt{z}\rfloor_{\FF}}).\nonumber
	\end{alignat}
	Assume that the expected rounding error is zero:
	\begin{alignat}{6}	
		&&&\frac{v_1}{\lceil \sqrt{z}\rceil_{\FF}-\lfloor \sqrt{z}\rfloor_{\FF}}
		(\frac{\sqrt{z}-\lfloor \sqrt{z}\rfloor_{\FF}}{\lceil \sqrt{z}\rceil_{\FF}}+\frac{\lceil \sqrt{z}\rceil_{\FF}-\sqrt{z}}{\lfloor \sqrt{z}\rfloor_{\FF}})=\frac{v_1}{\sqrt{z}}\nonumber\\
		&&\Rightarrow&\sqrt{z}
		(\frac{\sqrt{z}-\lfloor \sqrt{z}\rfloor_{\FF}}{\lceil \sqrt{z}\rceil_{\FF}}+\frac{\lceil \sqrt{z}\rceil_{\FF}-\sqrt{z}}{\lfloor \sqrt{z}\rfloor_{\FF}})=\lceil \sqrt{z}\rceil_{\FF}-\lfloor \sqrt{z}\rfloor_{\FF}\nonumber\\
		&&\Rightarrow&
		\sqrt{z}(\frac{\lceil \sqrt{z}\rceil_{\FF}}{\lfloor \sqrt{z}\rfloor_{\FF}}-
		\frac{\lfloor \sqrt{z}\rfloor_{\FF}}{\lceil \sqrt{z}\rceil_{\FF}})=\lceil \sqrt{z}\rceil_{\FF}-\lfloor \sqrt{z}\rfloor_{\FF}+z(\frac{1}{\lfloor \sqrt{z}\rfloor_{\FF}}-\frac{1}{\lceil \sqrt{z}\rceil_{\FF}}).\nonumber
	\end{alignat}	
	For any $\FF$ with $\hat{\FF}\subseteq\FF$ this is impossible to hold given that $\sqrt{z}$ is irrational and $\lceil \sqrt{z}\rceil_{\FF}>\lfloor \sqrt{z}\rfloor_{\FF}>0$: The left-hand side is an irrational number, whereas the right-hand side is rational. Batch normalization suffers from biased rounding error due to the division by $\sqrt{z}$, which can also be found when computing its gradient \cite[Section 3]{ioffe2015}.
\end{proof}

\noindent For simplicity let 
$\widehat{\nabla} F^{k,i}(\bm{w}^k):=\widehat{\nabla} F(\bm{w}^k+\hat{\bm{u}}^k,\bm{\xi}^{k,i},\bm{b}^{k,i})\label{F-tilde-sim}$ for $i\in[M]$, and
$\widehat{\nabla} \overline{F}^k(\bm{w}^k):=\frac{1}{M}\sum_{i=1}^M\widehat{\nabla} F^{k,i}(\bm{w}^k)\label{F-tilde-sim2}$. We will require the following bound.
\begin{proposition}
	\label{normbound}
	For all $k\geq K$ from Assumption \ref{approx_assump}, $\EE[\|\widehat{\nabla} \overline{F}^k(\bm{w}^k)\|^2_2|\F_{k-1}]\leq c_2Q$ almost surely.
\end{proposition}
\begin{proof}
	\begin{alignat}{6}
		&\EE[\|\widehat{\nabla} \overline{F}^k(\bm{w}^k)\|^2_2|\F_{k-1}]&=&\EE[\|\frac{1}{M}\sum_{i=1}^M\widehat{\nabla}F(\bm{w}^k+\hat{\bm{u}}^k,\bm{\xi}^{k,i},\bm{b}^{k,i})\|^2_2|\F_{k-1}]\nonumber\\
		&&=&\EE[\sum_{j=1}^d(\frac{1}{M}\sum_{i=1}^M\widehat{\nabla}F_j(\bm{w}^k+\hat{\bm{u}}^k,\bm{\xi}^{k,i},\bm{b}^{k,i}))^2|\F_{k-1}]\nonumber\\
		&&\leq&\EE[\sum_{j=1}^d\frac{1}{M}\sum_{i=1}^M\widehat{\nabla}F_j(\bm{w}^k+\hat{\bm{u}}^k,\bm{\xi}^{k,i},\bm{b}^{k,i})^2|\F_{k-1}]\nonumber\\
		&&=&\frac{1}{M}\sum_{i=1}^M\EE[\|\widehat{\nabla}F(\bm{w}^k+\hat{\bm{u}}^k,\bm{\xi}^{k,i},\bm{b}^{k,i})\|^2_2|\F_{k-1}]\stackrel{\text{a.s.}}{\leq}& c_2Q,\nonumber		
	\end{alignat}
	where the first inequality uses Jensen's inequality and the second uses \eqref{errorupper}. 
\end{proof}

\subsubsection{Discussion on Modelling Assumptions}

The use of stochastic rounding and iterate perturbation when computing $\widehat{\nabla} F$ has been modelled for completeness, but in terms of our convergence analysis, all that is needed is some (black-box) function $\widehat{\nabla} F(w^k)$, ignoring all other arguments, for which Assumption \ref{approx_assump} holds. 

There is generally a large gap between the observed rounding error and what can be guaranteed theoretically. For round to nearest using floating-point arithmetic, ``the constants (in an error bound) usually cause the bound to overestimate the actual error by orders of magnitude'' \cite[pg. 65]{higham2002}. For the dot product of two vectors $\bm{x}$, $\bm{y}\in\GG^n$, where $\GG$ denotes a floating-point arithmetic environment, the absolute error is bounded by $\gamma|\bm{x}|^T|\bm{y}|$, for $\gamma:=\frac{nu}{1-nu}$, where $u$ is the unit roundoff \cite[Eq. (3.5)]{higham2002}. Considering the number formats used in modern GPUs for machine learning training \cite{sevegnani2025}, namely FP16 ($u=2^{-11}$), BF16 ($u=2^{-8}$), FP8 E4M3 ($u=2^{-4}$), and FP8 E5M2 ($u=2^{-3}$), and that this bound requires $nu<1$, it fails to hold when $n>2048, 256, 16,$ and $8$, respectively. Using stochastic rounding, the absolute error of dot products given above can be guaranteed to hold with probability at least  $T(\lambda,n):=1-2n\exp(-0.5\lambda^2)$ with $\gamma=\exp((\lambda\sqrt{n}u+nu^2)(1-u)^{-1})-1$ \cite[Theorem 4.8]{connolly2021}. For the Resnet models considered in Section \ref{empirical}, a single forward pass requires up to 71.48 million FLOPs \cite[Table 1]{guan2018}. Using the approximation that back propagation requires twice as many operations as forward propagation following \cite[Appendix C.1]{zhou2021}, results in 214.44 million rounding operations per gradient calculation. Considering now a dot product with that many FLOPs (multiply-adds), choosing $\lambda=6.413$, which only gives a probability bound $T(\lambda,n)<0.5$, results in $\gamma>1.377E42$ when using FP16. Considering the function $\frac{1}{2}\bm{w}^T\bm{A}\bm{w}$ where $\bm{A}$ is symmetric, $\bm{w}\in\text{BF16}^{14,634}$, and again $\lambda=6.413$, computing the gradient, $\bm{A}\bm{w}$, requiring $14,634^2<214.44$ million FLOPs, results in a per element $\gamma>25.21$ (an absolute error bound $>25.21 |\bm{A}_i||\bm{w}|$), with again $T(\lambda,n)<0.5$ \cite[Theorem 4.9]{connolly2021}. 

From these simple examples, trying to bound the rounding error of deep learning models, besides being complicated given the large number of layers and nonlinear functions employed, is likely to result in a bound of little use. For this reason, it is perhaps more practical to view $\widehat{\nabla} F$ as a black-box function when considering its rounding error, and relying only on the empirical verification of Assumption \ref{approx_assump} as needed. We give an example of how this can be done in Section \ref{testing_assum}. 

At the same time, it is important to show theoretically that Assumption \ref{approx_assump} can be satisfied using fixed-point arithmetic, which is done in the following detailed example, where explicit values for $c_1$ and $c_2$ are given for a chosen problem size and $\FF$. 
    
\begin{example}[Ramp Loss Binary Classification]
	\label{ex_ramp}
	We consider a simple non-convex Lipschitz continuous loss function which is not regular: binary classification using a linear predictor and the ramp loss \cite[Section 15.2.3]{shalev2014}. In this setting $\bm{\xi}$ is of the form $[\bm{x}^T,y]^T$, where $\bm{x}\in\RR^d$ and $y\in\{-1,1\}$ are the independent and dependent variables, respectively. Assuming that there are 
	$N\in\NN$ observations, for $i\in[N]$, $F(\bm{w},\bm{\xi}^i)=\min\{1,\max\{0,1-y^i\langle\bm{x}^i,\bm{w}\rangle\}\}$ and  $f(\bm{w})=\frac{1}{N}\sum_{i=1}^NF(\bm{w},\bm{\xi}^i)$. For each $i\in[N]$, $L_0(\bm{\xi}^i)=\|\bm{x}^i\|_2$, and hence $L_0=\frac{1}{N}\sum_{i=1}^N\|\bm{x}^i\|_2$:
	\begin{alignat}{6}		
		&&&|F(\bm{w},\bm{\xi}^i)-F(\bm{w}',\bm{\xi}^i)|\nonumber\\
		&&=&|\min\{1,\max\{0,1-y^i\langle\bm{x}^i,\bm{w}\rangle\}\}-\min\{1,\max\{0,1-y^i\langle\bm{x}^i,\bm{w}'\rangle\}\}|\nonumber\\
		&&\leq&|\langle \bm{x}^i,\bm{w}-\bm{w}'\rangle|\nonumber\\
		&&\leq&\|\bm{x}^i\|_2\|\bm{w}-\bm{w}'\|_2,\nonumber	
	\end{alignat}
	where the first inequality uses the nonexpansiveness of the projection $\min\{1,\max\{0,\cdot\}\}$ onto $[0,1]$ \cite[Theorem 5.4(b)]{beck2017}. Using the definition \eqref{cldef} for the non-differentiable points,	
	\begin{alignat}{6}		
		\partial F(\bm{w},\bm{\xi})=
		\begin{cases}
			\bm{0}&{\text{if }} y\langle\bm{x},\bm{w}\rangle>1,\nonumber\\
			-\chi_1 y\bm{x}\quad\chi_1\in[0,1]&{\text{if }} y\langle\bm{x},\bm{w}\rangle=1,\nonumber\\
			-y\bm{x}&{\text{if }} 0<y\langle\bm{x},\bm{w}\rangle<1,\nonumber\\
			-\chi_2 y\bm{x}\quad\chi_2\in[0,1]&{\text{if }} y\langle\bm{x},\bm{w}\rangle=0,\nonumber\\
			\bm{0}&{\text{if }} y\langle\bm{x},\bm{w}\rangle<0.\nonumber\\
		\end{cases}	
	\end{alignat}	
	
	\noindent The approximate gradient $\widetilde{\nabla} F(\bm{w},\bm{\xi})$ is set to an element of $\partial F(\bm{w},\bm{\xi})$ with $\chi_1=\chi_2=\chi\in[0,1]$, which we define as $\partial F(\bm{w},\bm{\xi},\chi)$. Using Proposition \ref{old_props}(1), $\nabla f_{\alpha}(\bm{w})=\frac{1}{N}\sum_{i=1}^N\EE[\partial F(\bm{w}+\bm{u},\bm{\xi}^i,\chi)]$. In order to study $\partial F(\bm{w}+\bm{u},\bm{\xi},\chi)$, we consider two cases: 1. $y^i\langle\bm{x}^i,\bm{w}\rangle\in\{0,1\}$ and 2. $y^i\langle\bm{x}^i,\bm{w}\rangle\notin\{0,1\}$. The analysis relies on setting the perturbation parameter $\alpha$ arbitrarily small, which is in alignment with our convergence analysis in Theorem \ref{convergence}, where $\lim\limits_{k\rightarrow \infty}\alpha_k=0$.
		
	{\bf Case 1}: The random variable $y^i\langle \bm{x}^i,\bm{u}\rangle$, with $y^i$ and $\bm{x}^i$ known, is a sum of independent random variables symmetric about zero, hence its distribution is symmetric about zero as well. When $y^i\langle\bm{x}^i,\bm{w}\rangle=1$, this results in $\PP(y^i\langle \bm{x}^i,\bm{w}+\bm{u}\rangle>1)=\PP(y^i\langle \bm{x}^i,\bm{w}+\bm{u}\rangle<1)=0.5$. Choosing $\alpha>0$ such that $\max\limits_{i\in[N]}\sum_{j=1}^d|x_j^i|<\frac{1}{\alpha}$ guarantees that $y^i\langle \bm{x}^i,\bm{w}+\bm{u}\rangle>0$ for all $u
	\in B^{\infty}_{\alpha}$, resulting in $\EE[\partial F(\bm{w}+\bm{u},\bm{\xi}^i,0.5)]=\partial F(\bm{w},\bm{\xi}^i,0.5)$. When $y^i\langle\bm{x}^i,\bm{w}\rangle=0$, the same reasoning (and $\alpha$) shows that $\EE[\partial F(\bm{w}+\bm{u},\bm{\xi}^i,0.5)]=\partial F(\bm{w},\bm{\xi}^i,0.5)$.
	
	{\bf Case 2}: Given that $\bm{w}\in\FF$, there are only a finite number of values that $y^i\langle \bm{x}^i,\bm{w}\rangle$ can equal. For all $i\in[N]$ and $\bm{w}\in\FF^d$ such that $y^i\langle \bm{x}^i,\bm{w}\rangle\notin\{0,1\}$, there exists a constant $\tau>0$ such that $\min\{|\langle \bm{x}^i,\bm{w}\rangle|,|1-y^i\langle \bm{x}^i,\bm{w}\rangle|\}>\tau$. By choosing $\alpha>0$ such that 
	$\max\limits_{i\in[N]}\sum_{j=1}^d|x_j^i|\leq \frac{\tau}{\alpha}$, it holds that $\sgn(\langle \bm{x}^i,\bm{w}+\bm{u}\rangle)=\sgn(\langle \bm{x}^i,\bm{w}\rangle)$ and $\sgn(1-y^i\langle \bm{x}^i,\bm{w}+\bm{u}\rangle)=\sgn(1-y^i\langle \bm{x}^i,\bm{w}\rangle)$ for all $i\in[N]$ and $\bm{w}\in\FF^d$ when $y^i\langle \bm{x}^i,\bm{w}\rangle\notin\{0,1\}$, with the perturbation $\bm{u}$ having no effect on the computed subgradient. 
	
	In summary, for a sufficiently small $\alpha>0$, $\nabla f_{\alpha}(\bm{w})=\frac{1}{N}\sum_{i=1}^N\EE[\partial F(\bm{w}+\bm{u},\bm{\xi}^i,0.5)]=\frac{1}{N}\sum_{i=1}^N\partial F(\bm{w},\bm{\xi}^i,0.5)$. 
	For the approximate stochastic gradient $\widehat{\nabla} F$, $\widehat{P}$ can be chosen as a degenerate probability distribution with $\PP(\hat{\bm{u}}=\bm{0})=1$, and for simplicity, $\hat{\bm{u}}$ will be omitted from the definition of $\widehat{\nabla} F$ for the remainder of this example. 

	To give some structure to the problem, assumptions on $\bm{\xi}$ are needed. Given that the data $\{\bm{\xi}^i\}$ is stored on a computer in some native format, $\{\bm{\xi}^i\}\subset{\GG}^{d+1}$ (e.g., single-precision floating-point), we can only assume that they are noisy samples from the true distribution $\PP_{\bm{\xi}}$. We will assume that the numerical error from storing samples of $\bm{\xi}$ in $\GG$ is negligible, and that $\{\bm{\xi}^i\}$ still inherit key properties from $\PP_{\bm{\xi}}$. To start, we assume that $y^i\langle\bm{x}^i,\bm{w}\rangle\neq z$ for all $i\in[N]$, $z\in\{0,1\}$, and $\bm{w}\in\FF^d$, which holds almost surely when the marginal distribution of $\bm{x}$ is continuous, so that Case 1 can now be ignored, and we can set $\chi=0$. To model the computation of $\widehat{\nabla} F$, 
	it is assumed that the rounding error bounds described in Section \ref{finiteprec} extend to the case of $\bm{w}_j\in \FF$ and $\bm{x}^i_j\in\GG$, and we note that multiplying by $y^i\in\{-1,1\}$ does not incur any rounding error. The computation of $y^i\langle\bm{x}^i,\bm{w}\rangle$ in finite precision can then be modelled as 	
	$$y^i(\langle\bm{x}^i,\bm{w}\rangle+\sum_{j=1}^d \bm{\delta}^{x^iw}_j)=y^i\langle\bm{x}^i,\bm{w}\rangle+\delta^{i},$$	
	where $\bm{\delta}^{x^iw}_j=R(\bm{x}_j^i\bm{w}_j)-\bm{x}_j^i\bm{w}_j$, and $\delta^{i}:=y^i\sum_{j=1}^d \bm{\delta}^{x^iw}_j$. The gradient $-y\bm{x}^i$ with rounding error is modelled as $-y(\bm{x}^i+\bm{\delta}^{x^i})$, where $\bm{\delta}^{x^i}_j=R(\bm{x}_j^i)-\bm{x}_j^i$. With this notation, 	
	\begin{alignat}{6}		
		&\widehat{\nabla} F(\bm{w},\bm{\xi}^i,\bm{b})
		=&\begin{cases}
			\bm{0}&{\text{if }} y^i\langle\bm{x}^i,\bm{w}\rangle+\delta^{i}\geq 1,\nonumber\\			
			-y^i\bm{x}^i-y^i\bm{\delta}^{x^i}&{\text{if }} 0<y^i\langle\bm{x}^i,\bm{w}\rangle+\delta^{i}<1,\nonumber\\			
			\bm{0}&{\text{if }} y^i\langle\bm{x}^i,\bm{w}\rangle+\delta^{i}\leq0.\nonumber\\
		\end{cases}	
	\end{alignat}	

	\noindent For two samples from $\{\bm{\xi}^i\}$, $\bm{\xi}^{i_1}$ and $\bm{\xi}^{i_2}$, where $i_1,i_2\sim U([N])$,	
	\begin{alignat}{6}
		\label{dot_prod}
		&\langle\EE[\widehat{\nabla} F(\bm{w},\bm{\xi},\bm{b})],\nabla f_{\alpha}(\bm{w})\rangle
		=\langle \EE[\widehat{\nabla} F(\bm{w},\bm{\xi}^{i_1},\bm{b})],\EE[\partial F(\bm{w},\bm{\xi}^{i_2},0)]\rangle.
	\end{alignat}	
	Consider the following events, 
	\begin{alignat}{6}		
		A^i&:=(y^i\langle\bm{x}^i,\bm{w}\rangle\geq 1)\lor (y^i\langle\bm{x}^i,\bm{w}\rangle\leq 0)\nonumber\\
		\hat{A}^i&:=(y^i\langle\bm{x}^i,\bm{w}\rangle+\delta^i\geq 1)\lor (y^i\langle\bm{x}^i,\bm{w}\rangle+\delta^i\leq 0)\nonumber\\		
		B^i&:=(0<y^i\langle\bm{x}^i,\bm{w}\rangle<1)\nonumber\\
		\hat{B}^i&:=(0<y^i\langle\bm{x}^i,\bm{w}\rangle+\delta^i<1).\nonumber
	\end{alignat}	
	It follows that   
	\begin{alignat}{6}	
		\EE[\partial F(\bm{w},\bm{\xi}^{i_2},0)]&=\EE[-y^{i_2}\bm{x}^{i_2}\ind_{B^{i_2}}]\text{, and} \label{b_exp}
	\end{alignat}	
	\begin{alignat}{6}	
		\label{approx_b_exp}
		&\EE[\widehat{\nabla} F(\bm{w},\bm{\xi}^{i_1},\bm{b})]\\
		=&\EE[\ind_{A^{i_1}\cap\hat{B}^{i_1}}(-y^{i_1}\bm{x}^{i_1}-y^{i_1}\bm{\delta}^{x^{i_1}}))+\ind_{B^{i_1}\cap\hat{B}^{i_1}}(-y^{i_1}\bm{x}^{i_1}-y^{i_1}\bm{\delta}^{x^{i_1}}))]\nonumber\\
		=&\EE[-y^{i_1}\bm{x}^{i_1}\ind_{A^{i_1}\cap\hat{B}^{i_1}}]+\EE[-y^{i_1}\bm{x}^{i_1}\ind_{B^{i_1}}]+\EE[y^{i_1}\bm{x}^{i_1}\ind_{B^{i_1}\cap\hat{A}^{i_1}}]\nonumber\\
		=&\EE[\partial F(\bm{w},\bm{\xi}^{i_2},0)]-\PP(A^{i_1}\cap\hat{B}^{i_1})\EE[y^{i_1}\bm{x}^{i_1}|A^{i_1}\cap\hat{B}^{i_1}]+\PP(B^{i_1}\cap\hat{A}^{i_1})\EE[y^{i_1}\bm{x}^{i_1}|B^{i_1}\cap\hat{A}^{i_1}],\nonumber
	\end{alignat}		
	\noindent using the fact that $\EE[\bm{\delta}^{x^{i_1}}|y^{i_1},\bm{x}^{i_1},\bm{w},\delta^{i_1}]=\bm{0}$. 
	
	We see that $\EE[\widehat{\nabla} F(\bm{w},\bm{\xi}^{i_1},\bm{b})]$ is equal to $\nabla f_{\alpha}(\bm{w})$ plus two error terms. To demonstrate bounding this error, assume that $y\bm{x}\sim U(S^{d-1})$, where $S^{d-1}:=\{\bm{z}\in\RR^d:\|\bm{z}\|_2=1\}$ is the unit sphere. If $\bm{x}$ is normalized, $\bm{x}=\frac{\bm{x}'}{\|\bm{x}'\|_2}$, where originally $\bm{x}'\sim N(\bm{0},\bm{I})$, then $\bm{x}\sim U(S^{d-1})$. Further assuming that $y\in\{-1,1\}$ is a random variable independent of $\bm{x}$ (e.g. following a Rademacher distribution), it follows that $y\bm{x}\sim U(S^{d-1})$ as well. 
	We will assume that the sample data $\{\bm{\xi}^i\}$ has not been observed yet, so that we can compute probabilities and expectations based on their true distribution $\PP_{\bm{\xi}}$. Assuming that $\|\bm{w}\|_2\leq \gamma_1$, where $0<\gamma_1<1$, it holds that $A^i=(y^i\langle\bm{x}^i,\bm{w}\rangle\leq 0)$ and $\PP(A^i)=\PP(B^i)=0.5$ when $\bm{x}^i\sim U(S^{d-1})$, which we will assume holds (up to negligible error) with $\bm{x}^i\in \GG^d$. To further impose symmetry into the example, we assume that $\gamma_1\leq 0.875$, $d=100$, and for $\FF$, $t=10$. Using Proposition \ref{tailprob}, it holds that 
	$\PP[\delta^{i}\geq 0.125]<4.91E-143$, with the same bound holding for $\PP[\delta^{i}\leq -0.125]$. Taking these probabilities to be equal to $0$, the events defined above become almost surely equal to  
	\begin{alignat}{6}		
	A^i&=(-0.875\leq y^i\langle\bm{x}^i,\bm{w}\rangle\leq 0)\nonumber\\
	\hat{A}^i&=(-1< y^i\langle\bm{x}^i,\bm{w}\rangle+\delta^i\leq 0)\nonumber\\		
	B^i&=(0<y^i\langle\bm{x}^i,\bm{w}\rangle\leq 0.875)\nonumber\\
	\hat{B}^i&=(0<y^i\langle\bm{x}^i,\bm{w}\rangle+\delta^i<1).\nonumber
	\end{alignat}
	
	\noindent By the imposed symmetry of the problem,  $\EE[y^{i_1}\bm{x}^{i_1}|A^{i_1}\cap\hat{B}^{i_1}]=-\EE[y^{i_1}\bm{x}^{i_1}|B^{i_1}\cap\hat{A}^{i_1}]$, $\PP(A^{i_1}\cap\hat{B}^{i_1})=\PP(B^{i_1}\cap\hat{A}^{i_1})$, with \eqref{approx_b_exp} simplifying to   	
	\begin{alignat}{6}	
	\label{symm_grad}
	&\EE[\widehat{\nabla} F(\bm{w},\bm{\xi}^{i_1},\bm{b})]=\EE[\partial F(\bm{w},\bm{\xi}^{i_2},0)]+2\PP(B^{i_1}\cap\hat{A}^{i_1})\EE[y^{i_1}\bm{x}^{i_1}|B^{i_1}\cap\hat{A}^{i_1}].
	\end{alignat}
	
	\noindent Given the rotation invariance of $U(S^{d-1})$, without loss of generality, it will be assumed that 
	$\bm{w}=\gamma_1 \bm{e}_1$, where $\bm{e}_1$ is the first standard basis, with the general result following. Considering the expectation \eqref{b_exp}, $\EE[-y^{i_2}\bm{x}_j^{i_2}\ind_{B^{i_2}}]=0$ for $j>1$, and using the marginal distribution of $\bm{z}_1$ for $\bm{z}\sim U(S^{d-1})$ \cite[Problem 1.32 (a)]{muirhead1982}, $f(\bm{z}_1)=\frac{\Gamma(\frac{d}{2})}{\sqrt{\pi}\Gamma(\frac{d-1}{2})}(1-\bm{z}_1^2)^{\frac{d-3}{2}}$,
	
	\begin{alignat}{6}	
		\EE[\bm{z}_1\ind_{0<\bm{z}_1<1}]=&\frac{\Gamma(\frac{d}{2})}{\sqrt{\pi}\Gamma(\frac{d-1}{2})}\int_{0}^1\bm{z}_1(1-\bm{z}_1^2)^{\frac{d-3}{2}}dz_1=\frac{\Gamma(\frac{d}{2})}{\sqrt{\pi}\Gamma(\frac{d-1}{2})(d-1)}.\nonumber	
	\end{alignat}
Applying the double inequality $(\frac{v}{v+s})^{1-s}\leq \frac{\Gamma(v+s)}{v^s\Gamma(v)}\leq 1$ \cite[Eq. 7]{wendel1948} for $v>0$ and $0<s<1$, it holds that 
	\begin{alignat}{6}	
		\frac{1}{\sqrt{2\pi d}}\leq\EE[\bm{z}_1\ind_{0<\bm{z}_1<1}]\leq\frac{1}{\sqrt{2\pi(d-1)}}.\nonumber
	\end{alignat}
	
	\noindent For a general vector $\bm{w}$ it then holds that 
	\begin{alignat}{6}	
		\EE[\partial F(\bm{w},\bm{\xi}^{i_2},0)]&=\EE[-y^{i_2}\bm{x}^{i_2}\ind_{B^{i_2}}]=\frac{-\gamma_2\bm{w}}{\|\bm{w}\|_2},\nonumber		
	\end{alignat}	
	where $\gamma_2\in[\frac{1}{\sqrt{2\pi d}},\frac{1}{\sqrt{2\pi(d-1)}}]$, and that 
	$\EE[-y^{i_2}\bm{x}^{i_2}|B^{i_2}]=\frac{-2\gamma_2\bm{w}}{\|\bm{w}\|_2}$, given that $\PP(B^{i_2})=0.5$.	
	
	Considering now $\EE[y^{i_1}\bm{x}^{i_1}|B^{i_1}\cap\hat{A}^{i_1}]$, and using the reasoning that vectors satisfying $B^{i_1}\land\hat{A}^{i_1}$ will be biased towards, if not very close to the hyperplane $\{\bm{z}:\bm{z}^T\bm{w}=0\}$, we simply claim that	
	\begin{alignat}{6}
	\langle\EE[\partial F(\bm{w},\bm{\xi}^{i_2},0)],\EE[y^{i_1}\bm{x}^{i_1}|B^{i_1}\cap\hat{A}^{i_1}]\rangle	=&\langle0.5\EE[-y^{i_2}\bm{x}^{i_2}|B^{i_2}],\EE[y^{i_1}\bm{x}^{i_1}|B^{i_1}\cap\hat{A}^{i_1}]\rangle\nonumber\\
	\geq&-0.5\langle\EE[y^{i_2}\bm{x}^{i_2}|B^{i_2}],\EE[y^{i_1}\bm{x}^{i_1}|B^{i_1}]\rangle\nonumber\\
	=&-2\gamma^2_2=-2\|\EE[\partial F(\bm{w},\bm{\xi}^{i_2},0)]\|^2_2.\label{jump_bound}	
	\end{alignat}
	
	\noindent To bound $\PP(B^{i_1}\cap\hat{A}^{i_1})$, assuming again that $\bm{w}=\gamma_1 \bm{e}_1$, $d=100$, and $t=10$, and following ideas from \cite[Proposition 3.3]{harms2019}, 	
	\begin{alignat}{6}	
	\PP(B^{i_1}\cap\hat{A}^{i_1})&=\frac{\Gamma(\frac{d}{2})}{\sqrt{\pi}\Gamma(\frac{d-1}{2})}\int_{0}^1(1-z_1^2)^{\frac{d-3}{2}}\PP(\delta^i<-\gamma_1z_1)dz_1\nonumber\\	
	&\leq\frac{\sqrt{d-1}}{\sqrt{2\pi}}\int_{0}^1(1-z_1^2)^{\frac{d-3}{2}}\PP(\delta^i<-\gamma_1z_1)dz_1\nonumber\\	
	&\leq\frac{\sqrt{d-1}}{\sqrt{2\pi}}\int_{0}^1(1-z_1^2)^{\frac{d-3}{2}}\exp\left(\frac{-2(\gamma_1z_1)^2}{d\beta^{-2t}}\right)dz_1\nonumber\\
	&\leq \frac{\sqrt{d-1}}{\sqrt{2\pi}}\int_{0}^1\exp\left(\frac{-z_1^2(d-3)}{2}\right)\exp\left(-2(\gamma_1z_1)^2d^{-1}\beta^{2t}\right)dz_1\nonumber\\
	&\leq \frac{\sqrt{d-1}}{\sqrt{2\pi}}\int_{0}^{\infty}
	\exp\left(\frac{-z_1^2}{2}\left(d-3+ 4\gamma_1^2d^{-1}\beta^{2t}\right)\right)dz_1\nonumber\\	
	&=\frac{\sqrt{d-1}}{\sqrt{d-3+ 4\gamma_1^2d^{-1}\beta^{2t}}}\PP_{\widehat{z}_1\sim \N(0,\frac{1}{d-3+ 4\gamma_1^2d^{-1}\beta^{2t}})}(\widehat{z}_1\geq 0)\nonumber\\
	&=\frac{0.5\sqrt{d-1}}{\sqrt{d-3+ 4\gamma_1^2d^{-1}\beta^{2t}}}\nonumber\\
	&<0.244,\label{prob_bound}
	\end{alignat}	
	\noindent where the first inequality bounds $\frac{\Gamma(\frac{d}{2})}{\Gamma(\frac{d-1}{2})}$ using again $\frac{\Gamma(v+s)}{v^s\Gamma(v)}\leq 1$, the second inequality uses Proposition \ref{tailprob}, the third inequality uses $(1+x)\leq e^x$ for all $x\in\RR$. Computing the dot product \eqref{dot_prod},	
\begin{alignat}{6}	
	&\EE[\langle\widehat{\nabla} F(\bm{w},\bm{\xi},\bm{b}),\nabla f_{\alpha}(\bm{w})\rangle]\nonumber\\	
	=&\langle \nabla f_{\alpha}(\bm{w})+2\PP(B^{i_1}\cap\hat{A}^{i_1})\EE[y^{i_1}\bm{x}^{i_1}|B^{i_1}\cap\hat{A}^{i_1}],\nabla f_{\alpha}(\bm{w})\rangle\nonumber\\
	\geq&\|\nabla f_{\alpha}(\bm{w})\|^2_2-4\PP(B^{i_1}\cap\hat{A}^{i_1})\|\nabla f_{\alpha}(\bm{w})\|^2_2\nonumber\\
	>&0.024\|\nabla f_{\alpha}(\bm{w})\|^2_2,\nonumber
\end{alignat}	
where the equality uses \eqref{symm_grad}, the first inequality uses \eqref{jump_bound}, the final inequality uses \eqref{prob_bound}. It then holds that $c_1=0.024$ can be used in inequality \eqref{errorineq} of Assumption \ref{approx_assump} for this example. Considering now $c_2$ for inequality \eqref{errorupper},
given that $\|\bm{x}^i\|^2_2=1$ for all $i\in\NN$, it follows that $Q=\frac{1}{N}\sum_{i=1}^NL_0(\bm{\xi}^i)^2=\frac{1}{N}\sum_{i=1}^N\|\bm{x}^i\|^2_2=1$. Bounding the expectation,
\begin{alignat}{6}	
	\EE[\|\widehat{\nabla} F(\bm{w},\bm{\xi},\bm{b})\|^2_2]\leq&\langle -y^i\bm{x}^i-y^i\bm{\delta}^{x^i},-y^i\bm{x}^i-y^i\bm{\delta}^{x^i}\rangle\nonumber\\
	=&\|y^i\bm{x}^i\|^2_2+2\langle y^i\bm{x}^i,y^i\bm{\delta}^{x^i}\rangle+\|y^i\bm{\delta}^{x^i}\|^2_2\nonumber\\  
	\leq&1+2\|y^i\bm{x}^i\|_2\|y^i\bm{\delta}^{x^i}\|_2+\|y^i\bm{\delta}^{x^i}\|^2_2\nonumber\\ 
	=&1+2\|\bm{\delta}^{x^i}\|_2+\|\bm{\delta}^{x^i}\|2_2\nonumber\\ 
	\leq&1+2\sqrt{d}\beta^{-t}+d\beta^{-2t}\nonumber\\ 	
	<&1.02,\nonumber
\end{alignat}	
where $d=100$ and $t=10$ was used to get a value of $c_2=1.02$ for this example.
\end{example}

\subsubsection{Description of a Class of Adaptive Step Sizes $\eta_k$}
\label{adapt_class}

The adaptive step sizes studied in this work are motivated by methods such as gradient normalization and clipping. Besides having the potential to limit the negative effects of numerical error by stabilizing the algorithm steps \eqref{eq:1}, these step sizes require virtually no extra memory, making these light-weight variants of SGD applicable for training with numerical error in environments with limited computing resources.   

We consider step sizes $\eta_k=\hat{\eta}_k\psi_k$, where $\hat{\eta}_k>0$ is deterministic and $\psi_k\geq0$ is a random variable for all $k\in\NN$. The requirements placed on $\{\psi_k\}$ are given in the following assumption. 

\begin{assumption}\label{adaptive_stepsize}
	We assume that	
	\begin{enumerate}
		\item $\psi_k$ is essentially bounded by $\F_{k-1}$-measurable random variables $0\leq\Psi^L_k\leq\Psi^U_k<\infty\label{ss-bound}$ conditioning on $\F_{k-1}$: $\PP(\Psi^L_k\leq\psi_k\leq\Psi^U_k|\F_{k-1})=1$ almost surely for all $k\in \NN$,
		\item $\Psi^U_k$ is essentially uniformly bounded by constants $0<\underline{\Psi}^U\leq\overline{\Psi}^U<\infty\label{ss-bound-bound}$:  $\PP(\underline{\Psi}^U\leq\Psi^U_k\leq\overline{\Psi}^U)=1$ for all $k\in \NN$, and
		\item $\{\Delta_k\}\label{ss-Del}$, where $\Delta_k:=\Psi^U_k-\Psi^L_k$, almost surely uniformly converges \cite[Proposition 1]{rambaud2011} to $0$.  
	\end{enumerate}
\end{assumption}

\noindent Generating step size sequences which satisfy Assumption \ref{adaptive_stepsize} is straightforward. Considering a random variable $\psi'_k\in\RR$ which can follow any distribution, such as being a function of $\widehat{\nabla} F^{k}(\bm{w}^k)$, and random variables $\FF_{\geq 0}\ni\Psi^L_k\leq\Psi^U_k\in\FF_{>0}$ which are measurable at iteration $k$, such as functions of $\widehat{\nabla} F^{k-1}(\bm{w}^{k-1})$, setting $\psi_k=\max(\Psi^{L}_k,\min(R(\psi'_k),\Psi^{U}_k))\in \FF_{\geq 0}$ satisfies Assumption \ref{adaptive_stepsize}(1). Assumption \ref{adaptive_stepsize}(2) requires $\Psi^{U}_k$ to be bounded within a positive range, which can be similarly accomplished by clipping $\Psi^{U}_k$ for any chosen constants $\RR_{>0}\ni\underline{\Psi}^U\leq\overline{\Psi}^U$. Assumption \ref{adaptive_stepsize}(3) requires the length of the essential range of $\psi_k$, $\Delta_k$, to decrease with $\lim\limits_{k\rightarrow \infty}\psi_k=\lim\limits_{k\rightarrow \infty}\Psi^U_k=\lim\limits_{k\rightarrow \infty}\Psi^L_k$ almost surely, which can be satisfied, for example, by ensuring that $\Psi^L_k\geq\Psi^U_k-\frac{a}{k^b}$ for $a,b>0$. Assumptions \ref{adaptive_stepsize}(2) and \ref{adaptive_stepsize}(3), together, ensure that the step sizes $\eta_k$ will be positive almost surely for sufficiently large $k\in\NN$. Assumption \ref{adaptive_stepsize} allows for adaptive step sizes, but in the limit the adaptiveness can only be with respect to, in essence, $\F_{k-1}$-measurable quantities. Assumption \ref{adaptive_stepsize}(3) stems from the difficulty in analyzing $\EE[\psi_k\widehat{\nabla} F(\bm{w}^k+\hat{\bm{u}}^k,\bm{\xi}^{k,i},\bm{b}^{k,i})|\F_{k-1}]$ given that $\psi_k$ can change the expected step direction. We also note that Assumption \ref{adaptive_stepsize} is trivially satisfied with $\psi_k=\Psi^L_k=\Psi^U_k=\underline{\Psi}^U=\overline{\Psi}^U=1$ when adaptive step sizes are not desired.

There are relevant papers \cite{koloskova2023,zhang2020d,zhang2020c,zhang2020b} which have studied gradient clipping algorithms, proving non-asymptotic convergence results for non-convex stochastic loss functions after running for $K\in\NN$ iterations. Motivated by these papers, Assumption \ref{adaptive_stepsize} attempts to be a set of general conditions, with which new adaptive step sizes can be proposed and analyzed. As an example, in the following proposition, we show how the gradient clipping algorithm studied in \citep[Theorem 7]{zhang2020c}, 
\begin{alignat}{6}
\bm{w}^{k+1}=\bm{w}^k-\hat{\eta}\min\left(\frac{1}{16\hat{\eta}^2L_1(\|\bm{g}^k\|_2+\sigma)},1\right)\bm{g}^k,\label{grad_clip_alg}
\end{alignat}
fits within Assumption \ref{adaptive_stepsize}, where $\bm{g}^k$ is a stochastic gradient of a loss function $f$ sampled at $\bm{w}^k$. In their work, it is assumed that there exists a constant $\sigma>0$ such that $\|\bm{g}-\nabla f(\bm{w})\|_2\leq \sigma$ almost surely for all $\bm{w}\in\RR^d$ \citep[Assumption 5]{zhang2020c}, and that $\hat{\eta}=\min(\frac{1}{20L_0},\frac{1}{128L_1\sigma},\frac{1}{\sqrt{K}})$ \citep[Theorem 7]{zhang2020c}.   

The step sizes of \eqref{grad_clip_alg} are shown to follow Assumption \ref{adaptive_stepsize} in Proposition \ref{grad_clip_prop} if either of two conditions holds: (1) the stochastic gradients are bounded almost surely or (2) the algorithm \eqref{grad_clip_alg} eventually maintains a level of convergence to a stationary point with respect to the norm of the gradient.

\begin{proposition}
	\label{grad_clip_prop}	
	For the gradient clipping algorithm \eqref{grad_clip_alg} studied in \citep[Theorem 7]{zhang2020c}, the step sizes follow Assumptions \ref{adaptive_stepsize}(1) and \ref{adaptive_stepsize}(2). If there exists a constant $G>0$, and either 
	\begin{enumerate}	
	\item  $\|\bm{g}^k\|_2\leq G$ almost surely for all $k\in\NN$, or 
	\item there exists a $K'\in\NN_{\leq K}$ such that for $k\geq K'$, $\|\nabla f(\bm{w}^k)\|_2\leq G$ almost surely, 
	\end{enumerate}
	then taking $K\in\NN$ sufficiently large, the step sizes follow Assumption \ref{adaptive_stepsize}(3).
\end{proposition}
\begin{proof}	
	Taking $\psi_k=\min\left(\frac{1}{16\hat{\eta}^2L_1(\|\bm{g}^k\|_2+\sigma)},1\right)$, $\Psi^L_k=0$ and $\Psi^U_k=\underline{\Psi}^U=\overline{\Psi}^U=1$ for $k\in\NN$ are valid bounds for Assumptions \ref{adaptive_stepsize}(1) and \ref{adaptive_stepsize}(2). When the gradient is not clipped, i.e., $\frac{1}{16\hat{\eta}^2L_1(\|\bm{g}^k\|_2+\sigma)}\geq1$, \eqref{grad_clip_alg} takes the form of SGD with step size $\hat{\eta}$. If there exists a $K'\in\NN_{\leq K}$ such that gradient clipping does not occur almost surely for $k\geq K'$, then for $k\geq K'$ $\Psi^L_k=1$ is valid, $\Delta_k=0$, and Assumption \ref{adaptive_stepsize}(3) is satisfied. What remains to show is that this occurs when either conditions (1) or (2) hold and $K\in\NN$ is taken sufficiently large.
	
	Given that $\hat{\eta}=\min(\frac{1}{20L_0},\frac{1}{128L_1\sigma},\frac{1}{\sqrt{K}})$, for $K$ sufficiently large $\hat{\eta}=\frac{1}{\sqrt{K}}$ and
	$\psi_k=\min\left(\frac{K}{16L_1(\|\bm{g}^k\|_2+\sigma)},1\right)$. If condition (1) holds, taking $K$ sufficiently large such that $\frac{K}{16L_1(G+\sigma)}\geq1$, no gradient clipping will be performed almost surely for all $k\in\NN$.
	
	If condition (2) holds, we can use \citep[Assumption 5]{zhang2020c}, described below \eqref{grad_clip_alg}: For all $\bm{w}\in\RR^d$, almost surely,
	\begin{alignat}{6}
		&&\sigma&\geq \|\bm{g}-\nabla f(\bm{w})\|_2\nonumber\\ 
		&&&\geq\|\bm{g}\|_2-\|\nabla f(\bm{w})\|_2\nonumber\\
		\Rightarrow&&\|\bm{g}\|_2&\leq \sigma+\|\nabla f(\bm{w})\|_2,\nonumber
	\end{alignat}
	using the reverse triangle inequality, hence  
	\begin{alignat}{6}
		&&\frac{K}{16L_1(\|\bm{g}^k\|_2+\sigma)}\geq \frac{K}{16L_1(\|\nabla f(\bm{w}^k)\|_2+2\sigma)}\nonumber
	\end{alignat}
	almost surely. Setting $K\geq 16L_1(G+2\sigma)$, it holds almost surely for $k\geq K'$ that 
	\begin{alignat}{6}
		&&\psi_k\geq& \min\left(\frac{K}{16L_1(\|\nabla f(\bm{w}^k)\|_2+2\sigma)},1\right)\nonumber\\
		&&\geq& \min\left(\frac{16L_1(G+2\sigma)}{16L_1(G+2\sigma)},1\right)\geq 1,\nonumber
	\end{alignat} 
	with no gradient clipping being performed.
\end{proof}

\subsubsection{Assumptions Concerning $\hat{\bm{e}}^k$}

The random vector $\hat{\bm{e}}^k\in\RR^d$ in \eqref{eq:1} models the error from computing the addition, subtraction, multiplication, and division with finite precision in \eqref{eq:1} given $S^k:=\{\bm{w}^k,\hat{\eta}_k,\psi_k,M,\{\widehat{\nabla} F(\bm{w}^k+\hat{\bm{u}}^k,\bm{\xi}^{k,i},\bm{b}^{k,i})\}\}$. 
Our convergence analysis requires that the expected value of $\hat{\bm{e}}^k$ equals 0 when conditioned on $\sigma(\F_{k-1},\G_k)$ and that $\EE[\|\hat{\bm{e}}^k\|^2_2|\F_{k-1}]$ is $O(\hat{\eta}^2_k)$.
\begin{assumption}\label{e_assump}
	There exists a constant $c_3>0$ and a $K\in\NN$ such that for all $k\geq K$, almost surely 	
	\begin{alignat}{6}
		&&\EE[\hat{\bm{e}}^k|\F_{k-1},\G_k]&=\bm{0}\quad\text{and}\quad\EE[\|\hat{\bm{e}}^k\|^2_2|\F_{k-1}]&\leq c_3\hat{\eta}^2_k.\nonumber
\end{alignat}
\end{assumption}

\noindent We now show how Assumption \ref{e_assump} holds in a fixed-point environment $\FF$ using stochastic rounding.

\begin{proposition}
	\label{fixed-point}	
	Let
	\begin{alignat}{6}
	&\bm{w}^k\ominus((\hat{\eta}_k\otimes\psi_k)\oslash M)\otimes(\widehat{\nabla} F^{k,1}(\bm{w}^k)\oplus...\oplus\widehat{\nabla} F^{k,M}(\bm{w}^k))\nonumber\\
	=&\bm{w}^k-\frac{\hat{\eta}_k\psi_k}{M}\sum_{i=1}^M\widehat{\nabla} F^{k,i}(\bm{w}^k)+\hat{\bm{e}}^k,\label{fixedsgd}
\end{alignat}
	where the `o' symbols represent the corresponding operation in a fixed-point environment $\FF$ using stochastic rounding. Assume that $\bm{w}^k,\widehat{\nabla} F^{k,i}(\bm{w}^k)\in \FF^d$ for $i\in [M]$, $\hat{\eta}_k,M\in\FF_{>0}$, $\psi_k\in\FF_{\geq0}$, $r\geq 0$ in \eqref{fixp} is chosen sufficiently large such that no overflow will occur in the computation of the left-hand side of \eqref{fixedsgd}, and that $k\in\NN$ is sufficiently large such that Proposition \ref{normbound} holds. Assumption \ref{e_assump} holds with $c_3=\frac{1}{4}((M^2+1)c_2Q+M)$. 
\end{proposition}

\begin{proof}Evaluating the left-hand side of \eqref{fixedsgd}, following the order of operations, and using the rounding error bounds given in Section \ref{finiteprec}, 
	\begin{alignat}{6}
		&&&\bm{w}^k\ominus((\hat{\eta}_k\otimes\psi_k)\oslash M){\otimes}(\widehat{\nabla} F^{k,1}(\bm{w}^k)\oplus...\oplus\widehat{\nabla} F^{k,M}(\bm{w}^k))\nonumber\\	
		&&=&\bm{w}^k\ominus((\hat{\eta}_k\psi_k+\delta_0)\oslash M){\otimes}(\widehat{\nabla} F^{k,1}(\bm{w}^k)\oplus...\oplus\widehat{\nabla} F^{k,M}(\bm{w}^k))\nonumber\\			
		&&=&\bm{w}^k\ominus(\frac{\hat{\eta}_k\psi_k+\delta_0}{M}+\delta_1){\otimes}(\widehat{\nabla} F^{k,1}(\bm{w}^k)\oplus...\oplus\widehat{\nabla} F^{k,M}(\bm{w}^k))\nonumber\\
		&&=&\bm{w}^k\ominus(\frac{\hat{\eta}_k\psi_k+\delta_0}{M}+\delta_1){\otimes}\sum_{i=1}^M\widehat{\nabla} F^{k,i}(\bm{w}^k)\nonumber\\
		&&=&\bm{w}^k\ominus((\frac{\hat{\eta}_k\psi_k+\delta_0}{M}+\delta_1)\sum_{i=1}^M\widehat{\nabla} F^{k,i}(\bm{w}^k)+\bm{\delta}^2)\nonumber\\
		&&=&\bm{w}^k-((\frac{\hat{\eta}_k\psi_k+\delta_0}{M}+\delta_1)\sum_{i=1}^M\widehat{\nabla} F^{k,i}(\bm{w}^k)+\bm{\delta}^2)\nonumber\\
		&&=&\bm{w}^k-\frac{\hat{\eta}_k\psi_k}{M}\sum_{i=1}^M\widehat{\nabla} F^{k,i}(\bm{w}^k)-(\frac{\delta_0}{M}+\delta_1)\sum_{i=1}^M\widehat{\nabla} F^{k,i}(\bm{w}^k)-\bm{\delta}^2,\nonumber	
	\end{alignat}
	where $\delta_0\in\RR$ is the rounding error from the first multiplication,  $\delta_1\in\RR$ is the error from the division, and $\bm{\delta}^2\in\RR^d$ is the vector of errors from the second multiplication. Setting $\hat{\bm{e}}^k=-(\frac{\delta_0}{M}+\delta_1)\sum_{i=1}^M\widehat{\nabla} F^{k,i}(\bm{w}^k)-\bm{\delta}^2$,  
	\begin{alignat}{6}
		&&&\EE[\hat{\bm{e}}^k|\F_{k-1},\G_k]\nonumber\\
		&&=&-\EE[(\delta_0+M\delta_1)\widehat{\nabla} \overline{F}^k(\bm{w}^k)|\F_{k-1},\G_k]
		-\EE[\EE[\bm{\delta}^2|\F_{k-1},\G_k,\delta_0,\delta_1]|\F_{k-1},\G_k]\nonumber\\	
		&&=&-\EE[(\delta_0+M\delta_1)|\F_{k-1},\G_k]\widehat{\nabla} \overline{F}^k(\bm{w}^k)\nonumber\\	
		&&=&-M\EE[\EE[\delta_1|\F_{k-1},\G_k,\delta_0]|\F_{k-1},\G_k]\widehat{\nabla} \overline{F}^k(\bm{w}^k)=0.\nonumber	
	\end{alignat}		
	Considering now $\EE[\|\hat{\bm{e}}^k\|^2_2|\F_{k-1}]$,	
	\begin{alignat}{6}
		&&&\EE[\|\hat{\bm{e}}^k\|^2_2|\F_{k-1}]\nonumber\\
		&&=&\EE[\|(\delta_0+M\delta_1)\widehat{\nabla} \overline{F}^k(\bm{w}^k)+\bm{\delta}^2\|^2_2|\F_{k-1}]\nonumber\\	
		&&=&\EE[\|(\delta_0+M\delta_1)\widehat{\nabla} \overline{F}^k(\bm{w}^k)\|^2_2|\F_{k-1}]+2\EE[\langle (\delta_0+M\delta_1)\widehat{\nabla} \overline{F}^k(\bm{w}^k), \bm{\delta}^2\rangle|\F_{k-1}]\nonumber\\ &&+&\EE[\|\bm{\delta}^2\|^2_2|\F_{k-1}].\label{fixedineq}	
	\end{alignat}	
	Focusing on the first term $\EE[\|(\delta_0+M\delta_1)\widehat{\nabla} \overline{F}^k(\bm{w}^k)\|^2_2|\F_{k-1}]$,  
	\begin{alignat}{6}
		&&&\EE[(\delta_0+M\delta_1)^2\|\widehat{\nabla} \overline{F}^k(\bm{w}^k)\|^2_2|\F_{k-1}]\nonumber\\
		&&=&\EE[\EE[(\delta_0+M\delta_1)^2|\F_{k-1},\G_k]\|\widehat{\nabla} \overline{F}^k(\bm{w}^k)\|^2_2|\F_{k-1}]\nonumber\\
		&&\leq&(M^2+1)\frac{\beta^{-2t}}{4}\EE[\|\widehat{\nabla} \overline{F}^k(\bm{w}^k)\|^2_2|\F_{k-1}]\nonumber\\
		&&\stackrel{\text{a.s.}}{\leq}&(M^2+1)\frac{\beta^{-2t}}{4}c_2Q,\nonumber
	\end{alignat}	
	using Propositions \ref{storoundprop} and Proposition \ref{normbound}, where 
		\begin{alignat}{6}
		&&&\EE[\delta^2_0+2\delta_0M\delta_1+M^2\delta^2_1|\F_{k-1},\G_k]\nonumber\\
		&&\leq&\frac{\beta^{-2t}}{4}+\EE[\EE[2\delta_0M\delta_1+M^2\delta^2_1|\F_{k-1},\G_k,\delta_0]|\F_{k-1},\G_k]\nonumber\\				
		&&\leq&(M^2+1)\frac{\beta^{-2t}}{4}.\nonumber
	\end{alignat}	
	Considering now the second term of \eqref{fixedineq}, 	
	\begin{alignat}{6}	
		&&&2\EE[\langle (\delta_0+M\delta_1)\widehat{\nabla} \overline{F}^k(\bm{w}^k), \bm{\delta}^2\rangle|\F_{k-1}]\nonumber\\
		&&=&2\EE[\EE[\langle (\delta_0+M\delta_1)\widehat{\nabla} \overline{F}^k(\bm{w}^k), \bm{\delta}^2\rangle|\F_{k-1},\G_k,\delta_0,\delta_1]|\F_{k-1}]\nonumber\\
		&&=&2\EE[\langle (\delta_0+M\delta_1)\widehat{\nabla} \overline{F}^k(\bm{w}^k),\EE[\bm{\delta}^2|\F_{k-1},\G_k,\delta_0,\delta_1] \rangle|\F_{k-1}]=0,\nonumber
	\end{alignat}
	and the final term, 
	\begin{alignat}{6}
		\EE[\|\bm{\delta}^2\|^2_2|\F_{k-1}]=\EE[\sum_{i=1}^M(\bm{\delta}^2_i)^2|\F_{k-1}]&&=&\sum_{i=1}^M\EE[\EE[(\bm{\delta}^2_i)^2|\F_{k-1},\G_k,\delta_0,\delta_1]|\F_{k-1}]\nonumber\\
		&&\leq&\sum_{i=1}^M\frac{\beta^{-2t}}{4}=M\frac{\beta^{-2t}}{4}.\nonumber
	\end{alignat}		
	Continuing from \eqref{fixedineq}, 
	\begin{alignat}{6}
		\EE[\|\hat{\bm{e}}^k\|^2_2|\F_{k-1}]
		&&\stackrel{\text{a.s.}}{\leq}&((M^2+1)c_2Q+M)\frac{\beta^{-2t}}{4}\leq ((M^2+1)c_2Q+M)\frac{\hat{\eta}^2_k}{4},\nonumber
	\end{alignat}
	where the second inequality holds since $\lambda=\beta^{-t}\leq\hat{\eta}_k\in\FF_{>0}$.  	
\end{proof}

\subsection{Convergence Analysis of PISGD with Numerical Error}

This section now presents our asymptotic convergence result to a Clarke stationary point. The convergence analysis requires that $\Delta_k$ is $O(\frac{\hat{\eta}_k}{\alpha_k})$. 
Proposition \ref{stepsize}, which follows, gives a family of sequences $\{\alpha_k\}$ and $\{\hat{\eta}_k\}$ for which $\Delta_k\rightarrow 0$, satisfying Assumption \ref{adaptive_stepsize}(3).

\begin{assumption}\label{delta_bound}
	There exists a constant $c_4>0$ and a $K\in\NN$ such that for all $k\geq K$, $\Delta_k\leq c_4\frac{\hat{\eta}_k}{\alpha_k}$ almost surely.	
\end{assumption}

\begin{theorem}
	\label{convergence}	
	Assume that PISGD \eqref{eq:1} is run such that Assumption \ref{approx_assump} holds for a non-increasing sequence $\{\alpha_k\}$, the stochastic step size components $\{\psi_k\}\subset \RR_{\geq 0}$ satisfy Assumption \ref{adaptive_stepsize}, and $\{\alpha_k\}$ and $\{\hat{\eta}_k\}$ are chosen such that
	\begin{alignat}{6}
		\sum_{k=1}^{\infty}\alpha_k^d\hat{\eta}_k=\infty\text{,}\quad \sum_{k=1}^{\infty}\alpha_k^{d-1}\hat{\eta}^2_k<\infty,\label{the_cond}
	\end{alignat}	
	and $\lim\limits_{k\rightarrow \infty}\alpha_k=0$. Assuming in addition that Assumptions \ref{e_assump} and \ref{delta_bound} hold, almost surely, there exists a subsequence of indices $\{k_i\}$ such that 
	$$\lim\limits_{i\rightarrow \infty} \|\nabla f_{\alpha_{k_i}}(\bm{w}^{k_i})\|_2=0$$
	and for every accumulation point $\bm{w}^*$ of $\{\bm{w}^{k_i}\}$,  
	\begin{alignat}{6}
		\dist(\bm{0},\partial f(\bm{w}^*))=0.\nonumber
	\end{alignat}	
\end{theorem}

\noindent The proof of Theorem \ref{convergence} requires the following Robbins-Siegmund inequality. 
\begin{lemma}{\cite[Theorem 1]{robbins1971}}\label{robbins}
	For all $k\in\NN$, let $z_k$, $\theta_k$, and $\zeta_k$ be non-negative $\F_{k-1}$-measurable random variables such that almost surely 
	\begin{alignat}{6}
		&&\EE[z_{k+1}|\F_{k-1}]&\leq z_k+\theta_k-\zeta_k\nonumber
	\end{alignat} 
	and $\sum_{k=1}^\infty\theta_k<\infty$. It holds almost surely that $\sum_{k=1}^\infty\zeta_k<\infty$.
\end{lemma}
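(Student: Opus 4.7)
The plan is to reduce the statement to the classical non-negative supermartingale convergence theorem by an auxiliary-process/stopping-time construction. First, I would define
$$V_k := z_k + \sum_{i=1}^{k-1}\zeta_i, \qquad k\geq 1,$$
which is non-negative and $\I_k$-measurable. A direct computation using the hypothesis gives
$$\EE[V_{k+1}\mid \I_k] = \EE[z_{k+1}\mid\I_k] + \sum_{i=1}^{k}\zeta_i \leq z_k+\theta_k-\zeta_k+\sum_{i=1}^{k}\zeta_i = V_k+\theta_k,$$
so $(V_k)$ is a non-negative ``almost supermartingale'' in the sense of Robbins--Siegmund, with cumulative upward error $\sum \theta_k$ which is finite almost surely by hypothesis. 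The goal is to show $V_k$ converges a.s.\ to a finite limit, from which the conclusions of the lemma follow: since $\sum_{i=1}^{k-1}\zeta_i$ is non-decreasing and bounded above by $V_k$, it converges, hence $\sum \zeta_k<\infty$ a.s.; and then $z_k=V_k-\sum_{i=1}^{k-1}\zeta_i$ converges as the difference of two convergent sequences.

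The main obstacle is that $\sum_{k=1}^\infty \theta_k$ is only almost surely finite, not uniformly bounded, so one cannot directly subtract it to produce a genuine supermartingale. The standard remedy is to introduce, for each $N\in\NN$, the $\I_k$-stopping time
$$\tau_N := \inf\Bigl\{k\geq 1 : \sum_{i=1}^{k}\theta_i \geq N\Bigr\},$$
(well-defined since $\theta_i$ is $\I_i$-measurable, hence $\I_k$-measurable for $i\leq k$), and to consider the stopped process
$$X_k^N := V_{k\wedge \tau_N} - \sum_{i=1}^{(k\wedge \tau_N)-1}\theta_i + N.$$
The subtracted sum is bounded by $N$ on $\{\tau_N>k\}$ and by $\sum_{i=1}^{\tau_N-1}\theta_i<N$ on $\{\tau_N\leq k\}$, so $X_k^N\geq 0$. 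I would then verify, by splitting on $\{\tau_N>k\}$ and $\{\tau_N\leq k\}$ and using the almost-supermartingale inequality above, that $\EE[X_{k+1}^N\mid \I_k]\leq X_k^N$, i.e.\ $(X_k^N)$ is a non-negative supermartingale with respect to $(\I_k)$.

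By Doob's non-negative supermartingale convergence theorem, $X_k^N$ converges almost surely to a finite limit. On the event $\{\tau_N=\infty\}$, the stopping has no effect, so $V_k - \sum_{i=1}^{k-1}\theta_i$ converges a.s.; combined with the fact that $\sum_{i=1}^{k-1}\theta_i$ itself converges on this event (it is non-decreasing and bounded by $N$), this yields the a.s.\ convergence of $V_k$ on $\{\tau_N=\infty\}$. Finally, since $\{\sum_{k=1}^{\infty}\theta_k<\infty\}=\bigcup_{N=1}^\infty \{\tau_N=\infty\}$ and has probability one by hypothesis, $V_k$ converges a.s., and the two claimed conclusions $\sum_{k=1}^\infty \zeta_k<\infty$ and $\lim_k z_k = z_\infty<\infty$ follow from the decomposition $V_k = z_k + \sum_{i=1}^{k-1}\zeta_i$ as described above. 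The one subtlety worth flagging in the write-up is that non-negativity of $z_{k+1}$ makes the conditional expectation $\EE[z_{k+1}\mid\I_k]$ well-defined in $[0,\infty]$, and the hypothesis implicitly forces it to be finite a.s., which is all that is needed for the supermartingale convergence theorem to apply to $(X_k^N)$.
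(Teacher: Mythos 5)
Your argument is correct and is essentially the classical Robbins--Siegmund proof: the paper itself does not prove this lemma but imports it verbatim as \cite[Theorem 1]{robbins1971} (specialized to the case with no multiplicative term $\beta_k$), and the original proof proceeds exactly as you do, via the auxiliary process $V_k=z_k+\sum_{i<k}\zeta_i$, localization by the stopping times $\tau_N$ on the accumulated errors, and Doob's convergence theorem for non-negative supermartingales, followed by patching over $\bigcup_N\{\tau_N=\infty\}$. The one point you rightly flag --- that integrability of $z_1$ is not assumed, so one must invoke the version of the non-negative supermartingale convergence theorem for generalized (a.s.\ finite at time $1$ but possibly non-integrable) supermartingales, or first condition on $\I_1$ --- is the only technicality separating your sketch from a complete write-up, and it is handled correctly.
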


\begin{proof}{\it (Theorem \ref{convergence}):} Let the analysis begin at $k=\overline{K}\in \NN$, where $\overline{K}\in \NN$ is sufficiently large such that for all $k'\geq \overline{K}$ the (in)equalities in Assumptions \ref{approx_assump}, \ref{e_assump}, and \ref{delta_bound} hold, and $\Delta_k\leq c_1\underline{\Psi}^U$ almost surely using Assumption \ref{adaptive_stepsize}(3). By the $L^{\alpha}_1$-smoothness of $f_{\alpha}$ (Proposition \ref{old_props}.2 \& \cite[Lemma 1.2.3]{nesterov2004}),
	\begin{alignat}{6}
		&&f_{\alpha_k}(\bm{w}^{k+1})&\leq f_{\alpha_k}(\bm{w}^k)+\langle \nabla f_{\alpha_k}(\bm{w}^k),\bm{w}^{k+1}-\bm{w}^k\rangle+\frac{L_1^{\alpha_k}}{2}\|\bm{w}^{k+1}-\bm{w}^k\|^2_2\nonumber\\
		&&&=f_{\alpha_k}(\bm{w}^k)+\langle \nabla f_{\alpha_k}(\bm{w}^k),-\hat{\eta}_k\psi_k\widehat{\nabla} \overline{F}^k(\bm{w}^k)+\hat{\bm{e}}^k\rangle+\frac{L_1^{\alpha_k}}{2}\|\bm{w}^{k+1}-\bm{w}^k\|^2_2\label{non-start}\\
		\Rightarrow&&f_{\alpha_{k+1}}(\bm{w}^{k+1})&\leq f_{\alpha_k}(\bm{w}^k)+f_{\alpha_{k+1}}(\bm{w}^{k+1})-f_{\alpha_k}(\bm{w}^{k+1})-\hat{\eta}_k\psi_k\langle \nabla f_{\alpha_k}(\bm{w}^k),\widehat{\nabla} \overline{F}^k(\bm{w}^k)\rangle\nonumber\\
		&&&+\langle \nabla f_{\alpha_k}(\bm{w}^k),\hat{\bm{e}}^k\rangle+\frac{L_1^{\alpha_k}}{2}\|\bm{w}^{k+1}-\bm{w}^k\|^2_2.\label{ineq:1}
	\end{alignat} 
	Focusing on $f_{\alpha_{k+1}}(\bm{w}^{k+1})-f_{\alpha_k}(\bm{w}^{k+1})$,    
	\begin{alignat}{6}
		&&&f_{\alpha_{k+1}}(\bm{w}^{k+1})-f_{\alpha_k}(\bm{w}^{k+1})\nonumber\\
		&&=&f_{\alpha_{k+1}}(\bm{w}^{k+1})-\int_{-\alpha_k}^{\alpha_k}\int_{-\alpha_{k}}^{\alpha_{k}}...\int_{-\alpha_{k}}^{\alpha_{k}}\frac{f(\bm{w}^{k+1}+\bm{u})}{(2\alpha_k)^d}du_1du_2...du_d\nonumber\\
		&&=&f_{\alpha_{k+1}}(\bm{w}^{k+1})-\int_{-\alpha_k}^{\alpha_k}\int_{-\alpha_{k}}^{\alpha_{k}}...\int_{-\alpha_{k}}^{\alpha_{k}}\ind_{\{\bm{u}\in\RR^d:\|\bm{u}\|_{\infty}\leq\alpha_{k+1}\}}\frac{f(\bm{w}^{k+1}+\bm{u})}{(2\alpha_k)^d}du_1du_2...du_d\nonumber\\
		&&&-\int_{-\alpha_k}^{\alpha_k}\int_{-\alpha_{k}}^{\alpha_{k}}...\int_{-\alpha_{k}}^{\alpha_{k}}\ind_{\{\bm{u}\in\RR^d:\|\bm{u}\|_{\infty}>\alpha_{k+1}\}}\frac{f(\bm{w}^{k+1}+\bm{u})}{(2\alpha_k)^d}du_1du_2...du_d\nonumber\\
		&&=&f_{\alpha_{k+1}}(\bm{w}^{k+1})-f_{\alpha_{k+1}}(\bm{w}^{k+1})\frac{\alpha_{k+1}^d}{\alpha_k^d}\nonumber\\
		&&&-\int_{-\alpha_k}^{\alpha_k}\int_{-\alpha_{k}}^{\alpha_{k}}...\int_{-\alpha_{k}}^{\alpha_{k}}
		\ind_{\{\bm{u}\in\RR^d:\|\bm{u}\|_{\infty}>\alpha_{k+1}\}}\frac{f(\bm{w}^{k+1}+\bm{u})}{(2\alpha_k)^d}du_1du_2...du_d\nonumber\\
		&&\leq&f_{\alpha_{k+1}}(\bm{w}^{k+1})\left(1-\frac{\alpha_{k+1}^d}{\alpha_k^d}\right),\nonumber
	\end{alignat} 
	where the assumption that $\alpha_{k+1}\leq\alpha_k$ was used for the third equality, and Assumption \ref{pos_loss} was used for the inequality at the end. Plugging into \eqref{ineq:1},
	\begin{alignat}{6}
		&&f_{\alpha_{k+1}}(\bm{w}^{k+1})&\leq f_{\alpha_k}(\bm{w}^k)+f_{\alpha_{k+1}}(\bm{w}^{k+1})\left(1-\frac{\alpha_{k+1}^d}{\alpha_k^d}\right)
		-\hat{\eta}_k\psi_k\langle \nabla f_{\alpha_k}(\bm{w}^k),\widehat{\nabla} \overline{F}^k(\bm{w}^k)\rangle\nonumber\\
		&&&+\langle \nabla f_{\alpha_k}(\bm{w}^k),\hat{\bm{e}}^k\rangle+\frac{L_1^{\alpha_k}}{2}\|\bm{w}^{k+1}-\bm{w}^k\|^2_2\nonumber\\
		\Rightarrow&&\frac{\alpha_{k+1}^d}{\alpha_k^d}f_{\alpha_{k+1}}(\bm{w}^{k+1})&\leq f_{\alpha_k}(\bm{w}^k)-\hat{\eta}_k\psi_k\langle \nabla f_{\alpha_k}(\bm{w}^k),\widehat{\nabla} \overline{F}^k(\bm{w}^k)\rangle\nonumber\\
		&&&+\langle \nabla f_{\alpha_k}(\bm{w}^k),\hat{\bm{e}}^k\rangle+\frac{\sqrt{d}L_0}{2\alpha_k}\|-\hat{\eta}_k\psi_k\widehat{\nabla} \overline{F}^k(\bm{w}^k)+\hat{\bm{e}}^k\|^2_2\nonumber\\
		\Rightarrow&&\alpha_{k+1}^df_{\alpha_{k+1}}(\bm{w}^{k+1})&\leq \alpha_k^df_{\alpha_k}(\bm{w}^k)-\alpha_k^d\hat{\eta}_k\psi_k\langle \nabla f_{\alpha_k}(\bm{w}^k),\widehat{\nabla} \overline{F}^k(\bm{w}^k)\rangle+\alpha_k^d\langle \nabla f_{\alpha_k}(\bm{w}^k),\hat{\bm{e}}^k\rangle\nonumber\\
		&&&+\frac{\alpha_k^{d-1}\sqrt{d}L_0}{2}(\hat{\eta}^2_k\psi^2_k\|\widehat{\nabla} \overline{F}^k(\bm{w}^k)\|^2_2-2\hat{\eta}_k\psi_k\langle \widehat{\nabla} \overline{F}^k(\bm{w}^k),\hat{\bm{e}}^k\rangle+\|\hat{\bm{e}}^k\|^2_2),\label{ineq:2}
	\end{alignat} 
	where the value of $L_1^{\alpha_k}$ from Proposition \ref{old_props} was used in the second inequality. Taking the conditional expectation of \eqref{ineq:2} with respect to $\F_{k-1}$,
	\begin{alignat}{6}
		&&&\EE[\alpha_{k+1}^df_{\alpha_{k+1}}(\bm{w}^{k+1})|\F_{k-1}]\nonumber\\
		&&\leq&\alpha_k^df_{\alpha_k}(\bm{w}^k)-\alpha_k^d\hat{\eta}_k\EE[\psi_k \langle \nabla f_{\alpha_k}(\bm{w}^k),\widehat{\nabla} \overline{F}^k(\bm{w}^k)\rangle|\F_{k-1}]+\alpha_k^d\langle \nabla f_{\alpha_k}(\bm{w}^k),\EE[\hat{\bm{e}}^k|\F_{k-1}]\rangle\nonumber\\
		&&&+\frac{\alpha_k^{d-1}\sqrt{d}L_0}{2}(\hat{\eta}^2_k\EE[\psi^2_k\|\widehat{\nabla} \overline{F}^k(\bm{w}^k)\|^2_2|\F_{k-1}]-2\hat{\eta}_k\EE[\psi_k\langle \widehat{\nabla} \overline{F}^k(\bm{w}^k),\hat{\bm{e}}^k\rangle|\F_{k-1}]
		+\EE[\|\hat{\bm{e}}^k\|^2_2|\F_{k-1}]).\label{prerobbsieg}
	\end{alignat}
	It holds that $\EE[\hat{\bm{e}}^k|\F_{k-1}]=\EE[\EE[\hat{\bm{e}}^k|\F_{k-1},\G_k]|\F_{k-1}]=\bm{0}$ almost surely by Assumption \ref{e_assump}. Using Assumptions \ref{adaptive_stepsize}(1) and \ref{adaptive_stepsize}(2), and Proposition \ref{normbound}, 
	\begin{alignat}{6}
		&\EE[\psi^2_k\|\widehat{\nabla} \overline{F}^k(\bm{w}^k)\|^2_2|\F_{k-1}]\stackrel{\text{a.s.}}{\leq} (\Psi^U_k)^2\EE[\|\widehat{\nabla} \overline{F}^k(\bm{w}^k)\|^2_2|\F_{k-1}]
		\stackrel{\text{a.s.}}{\leq}(\overline{\Psi}^U)^2c_2Q,\nonumber
	\end{alignat}	
	and  
	\begin{alignat}{6}
		&&&\EE[\psi_k\langle \widehat{\nabla} \overline{F}^k(\bm{w}^k),\hat{\bm{e}}^k\rangle|\F_{k-1}]\nonumber\\
		&&=&\EE[\EE[\psi_k\langle \widehat{\nabla}\overline{F}^k(\bm{w}^k),\hat{\bm{e}}^k\rangle|\G_k,\F_{k-1}]|\F_{k-1}]\nonumber\\
		&&=&\EE[\psi_k\langle \widehat{\nabla} \overline{F}^k(\bm{w}^k),\EE[\hat{\bm{e}}^k|\G_k,\F_{k-1}]\rangle|\F_{k-1}]\stackrel{\text{a.s.}}{=}0\nonumber	
	\end{alignat}
	and $\EE[\|\hat{\bm{e}}^k\|^2_2|\F_{k-1}]\leq c_3 \hat{\eta}^2_k$ hold almost surely by Assumption \ref{e_assump}. Applying these (in)equalities in \eqref{prerobbsieg}, 	
	\begin{alignat}{6}
		&&\EE[\alpha_{k+1}^df_{\alpha_{k+1}}(\bm{w}^{k+1})|\F_{k-1}]&\stackrel{\text{a.s.}}{\leq} \alpha_k^df_{\alpha_k}(\bm{w}^k)-\alpha_k^d\hat{\eta}_k\EE[\psi_k\langle \nabla f_{\alpha_k}(\bm{w}^k),\widehat{\nabla} \overline{F}^k(\bm{w}^k)\rangle|\F_{k-1}]\nonumber\\
		&&&+\frac{\alpha_k^{d-1}\hat{\eta}^2_k\sqrt{d}L_0}{2}((\overline{\Psi}^U)^2c_2Q+c_3). \label{prerobbsieg2}
	\end{alignat}
	Focusing now on the conditional expectation $\EE[-\psi_k\langle \nabla f_{\alpha_k}(\bm{w}^k),\widehat{\nabla} \overline{F}^k(\bm{w}^k)\rangle|\F_{k-1}]$:
	\begin{alignat}{6}
		&&&\EE[-\psi_k\langle \nabla f_{\alpha_k}(\bm{w}^k),\widehat{\nabla} \overline{F}^k(\bm{w}^k)\rangle|\F_{k-1}]\nonumber\\
		&&=&\EE[\frac{\psi_k}{2}(\|\nabla f_{\alpha_k}(\bm{w}^k)-\widehat{\nabla} \overline{F}^k(\bm{w}^k)\|^2_2-\|\nabla f_{\alpha_k}(\bm{w}^k)\|^2_2-\|\widehat{\nabla} \overline{F}^k(\bm{w}^k)\|^2_2)|\F_{k-1}]\nonumber\\
		&&\stackrel{\text{a.s.}}{\leq}&\frac{\Psi^U_k}{2}\EE[\|\nabla f_{\alpha_k}(\bm{w}^k)-\widehat{\nabla} \overline{F}^k(\bm{w}^k)\|^2_2|\F_{k-1}]-\frac{\Psi^L_k}{2}\|\nabla f_{\alpha_k}(\bm{w}^k)\|^2_2-\frac{\Psi^L_k}{2}\EE[\|\widehat{\nabla} \overline{F}^k(\bm{w}^k)\|^2_2|\F_{k-1}]\nonumber\\	
		&&=&\frac{\Psi^U_k}{2}(\|\nabla f_{\alpha_k}(\bm{w}^k)\|^2_2-2\langle \nabla f_{\alpha_k}(\bm{w}^k),\EE[\widehat{\nabla} \overline{F}^k(\bm{w}^k)|\F_{k-1}]\rangle+\EE[\|\widehat{\nabla} \overline{F}^k(\bm{w}^k)\|^2_2|\F_{k-1}])\nonumber\\
		&&&-\frac{\Psi^L_k}{2}\|\nabla f_{\alpha_k}(\bm{w}^k)\|^2_2-\frac{\Psi^L_k}{2}\EE[\|\widehat{\nabla} \overline{F}^k(\bm{w}^k)\|^2_2|\F_{k-1}]\nonumber\\
		&&\stackrel{\text{a.s.}}{\leq}&\frac{\Psi^U_k}{2}(\|\nabla f_{\alpha_k}(\bm{w}^k)\|^2_2-2c_1\|\nabla f_{\alpha_k}(\bm{w}^k)\|^2_2+\EE[\|\widehat{\nabla} \overline{F}^k(\bm{w}^k)\|^2_2|\F_{k-1}])\nonumber\\
		&&&-\frac{\Psi^L_k}{2}\|\nabla f_{\alpha_k}(\bm{w}^k)\|^2_2-\frac{\Psi^L_k}{2}\EE[\|\widehat{\nabla} \overline{F}^k(\bm{w}^k)\|^2_2|\F_{k-1}]\nonumber\\
		&&=&(\frac{\Psi^U_k}{2}-c_1\Psi^U_k-\frac{\Psi^L_k}{2})\|\nabla f_{\alpha_k}(\bm{w}^k)\|^2_2	
		+(\frac{\Psi^U_k}{2}-\frac{\Psi^L_k}{2})\EE[\|\widehat{\nabla} \overline{F}^k(\bm{w}^k)\|^2_2|\F_{k-1}]\nonumber\\
		&&\stackrel{\text{a.s.}}{\leq}&(\frac{\Psi^U_k}{2}-c_1\underline{\Psi}^U-\frac{\Psi^L_k}{2})\|\nabla f_{\alpha_k}(\bm{w}^k)\|^2_2	
		+(\frac{\Psi^U_k}{2}-\frac{\Psi^L_k}{2})c_2Q\nonumber\\	
		&&=&(\frac{\Delta_k}{2}-c_1\underline{\Psi}^U)\|\nabla f_{\alpha_k}(\bm{w}^k)\|^2_2	
		+\frac{\Delta_k}{2}c_2Q\label{pre_cross_term}\\	
		&&\stackrel{\text{a.s.}}{\leq}&-\frac{c_1}{2}\underline{\Psi}^U\|\nabla f_{\alpha_k}(\bm{w}^k)\|^2_2	
		+\frac{c_4}{2}\frac{\hat{\eta}_k}{\alpha_k}c_2Q,\label{cross_term}
	\end{alignat}
	where the first inequality uses Assumption \ref{adaptive_stepsize}(1), the second inequality uses inequality \eqref{errorineq} of Assumption \ref{approx_assump}, the third inequality uses Assumption \ref{adaptive_stepsize}(2) and Proposition \ref{normbound}, and the last inequality uses the assumption that $\Delta_k\leq c_1\underline{\Psi}^U$ almost surely for $k'\geq \overline{K}$ and Assumption \ref{delta_bound}. Plugging \eqref{cross_term} into \eqref{prerobbsieg2}, 
	\begin{alignat}{6}
		&&&\EE[\alpha_{k+1}^df_{\alpha_{k+1}}(\bm{w}^{k+1})|\F_{k-1}]\nonumber\\
		&&\stackrel{\text{a.s.}}{\leq}& \alpha_k^df_{\alpha_k}(\bm{w}^k)-\alpha_k^d\hat{\eta}_k(\frac{c_1}{2}\underline{\Psi}^U\|\nabla f_{\alpha_k}(\bm{w}^k)\|^2_2	
		-\frac{c_4}{2}\frac{\hat{\eta}_k}{\alpha_k}c_2Q)
		+\frac{\alpha_k^{d-1}\hat{\eta}^2_k\sqrt{d}L_0}{2}((\overline{\Psi}^U)^2c_2Q+c_3)\nonumber\\
		&&=&\alpha_k^df_{\alpha_k}(\bm{w}^k)-\alpha_k^d\hat{\eta}_k\frac{c_1}{2}\underline{\Psi}^U\|\nabla f_{\alpha_k}(\bm{w}^k)\|^2_2+\frac{\alpha_k^{d-1}\hat{\eta}^2_k}{2}c_2c_4Q
		+\frac{\alpha_k^{d-1}\hat{\eta}^2_k\sqrt{d}L_0}{2}((\overline{\Psi}^U)^2c_2Q+c_3)\nonumber\\
		&&=&\alpha_k^df_{\alpha_k}(\bm{w}^k)-\alpha_k^d\hat{\eta}_k\frac{c_1}{2}\underline{\Psi}^U\|\nabla f_{\alpha_k}(\bm{w}^k)\|^2_2
		+\frac{\alpha_k^{d-1}\hat{\eta}^2_k}{2}(\sqrt{d}L_0((\overline{\Psi}^U)^2c_2Q+c_3)+c_2c_4Q).\nonumber
	\end{alignat}
	
	\noindent Lemma \ref{robbins} can now be applied (redefining the index from  $k=\overline{K},\overline{K}+1,...$ to $k=1,2,...$) with $z_k=\alpha_k^df_{\alpha_k}(\bm{w}^k)$, $\theta_k=\frac{\alpha_k^{d-1}\hat{\eta}^2_k}{2}(\sqrt{d}L_0((\overline{\Psi}^U)^2c_2Q+c_3)+c_2c_4Q)$, and $\zeta_k=\alpha_k^d\hat{\eta}_k\frac{c_1}{2}\underline{\Psi}^U\|\nabla f_{\alpha_k}(\bm{w}^k)\|^2_2$, given that 
	\begin{alignat}{6}
		&&&\sum_{k=\overline{K}}^{\infty}\frac{\alpha_k^{d-1}\hat{\eta}^2_k}{2}(\sqrt{d}L_0((\overline{\Psi}^U)^2c_2Q+c_3)+c_2c_4Q)\nonumber\\
		&&\leq&\frac{1}{2}(\sqrt{d}L_0((\overline{\Psi}^U)^2c_2Q+c_3)+c_2c_4Q)\sum_{k=1}^{\infty}\alpha_k^{d-1}\hat{\eta}^2_k<\infty\nonumber
	\end{alignat}	
	by assumption, proving that almost surely	
	\begin{alignat}{6}
		\sum_{k=\overline{K}}^{\infty}\alpha_k^d\hat{\eta}_k\frac{c_1}{2}\underline{\Psi}^U\|\nabla f_{\alpha_k}(\bm{w}^k)\|^2_2<\infty.\label{bound}
	\end{alignat}
	
	\noindent It follows that $\liminf\limits_{k\rightarrow \infty} \|\nabla f_{\alpha_k}(\bm{w}^k)\|_2=0$ almost surely, given that for any $\epsilon>0$ if there exists a $\overline{K}_2\geq \overline{K}$ such that $\|\nabla f_{\alpha_k}(\bm{w}^k)\|_2\geq \epsilon$ almost surely for all $k\geq\overline{K}_2$,  
	\begin{alignat}{6}
		\sum_{k=\overline{K}_2}^{\infty}\alpha_k^d\hat{\eta}_k\frac{c_1}{2}\underline{\Psi}^U\|\nabla f_{\alpha_k}(\bm{w}^k)\|^2_2\stackrel{\text{a.s.}}{\geq} \frac{c_1}{2}\underline{\Psi}^U\epsilon^2\sum_{k=\overline{K}_2}^{\infty}\alpha_k^d\hat{\eta}_k=\infty,\nonumber
	\end{alignat}
	given that $\sum_{k=1}^{\infty}\alpha_k^d\hat{\eta}_k=\infty$ by assumption and $\sum_{k=1}^{\overline{K}_2-1}\alpha_k^d\hat{\eta}_k$ is finite, contradicting \eqref{bound}. There exists almost surely a subsequence of indices $\{k_i\}$ for which $\lim\limits_{i\rightarrow \infty} \|\nabla f_{\alpha_{k_i}}(\bm{w}^{k_i})\|_2=\liminf\limits_{k\rightarrow \infty} \|\nabla f_{\alpha_{k}}(\bm{w}^{k})\|_2=0$.
	If $\bm{w}^*$ is an accumulation point of $\{\bm{w}^{k_i}\}$, let $\{k_{i_j}\}$ be a subsequence of $\{k_i\}$ such that $\lim\limits_{j\rightarrow \infty} \bm{w}^{k_{i_j}}= \bm{w}^*$. Given that $\partial^{\infty}_{\alpha_{k_{i_j}}} f(\bm{w}^{k_{i_j}})$ converges continuously to $\partial f(\bm{w}^*)$ by Proposition \ref{cont-conv}, it holds that 
	$$\lim\limits_{j\rightarrow \infty}\dist(\bm{0},\partial^{\infty}_{\alpha_{k_{i_j}}} f(\bm{w}^{k_{i_j}}))=\dist(\bm{0},\partial f(\bm{w}^*))$$
	\cite[Exercise 5.42 (b)]{rockafellar2009}. Since $\nabla f_{\alpha_{k_{i_j}}}(\bm{w}^{k_{i_j}})\in \partial^{\infty}_{\alpha_{k_{i_j}}} f(\bm{w}^{k_{i_j}})$ from Proposition \ref{inclusion}, 
	\begin{alignat}{6}
		\dist(\bm{0},\partial f(\bm{w}^*))=\lim\limits_{j\rightarrow \infty}\dist(\bm{0},\partial^{\infty}_{\alpha_{k_{i_j}}} f(\bm{w}^{k_{i_j}}))\leq\lim\limits_{j\rightarrow \infty}\|\nabla f_{\alpha_{k_{i_j}}}(\bm{w}^{k_{i_j}})\|_2=0,\nonumber
	\end{alignat}	
which concludes the proof.
\end{proof}

\noindent The next proposition gives a family of sequences $\{\alpha_k\}$ and $\{\hat{\eta}_k\}$ which satisfy the conditions described in Theorem \ref{convergence}. It is also shown that, with Assumption \ref{delta_bound}, they satisfy Assumption \ref{adaptive_stepsize}(3), i.e,  $\{\Delta_k\}\rightarrow 0$. The step sizes $\hat{\eta}_k$ are also modelled to have a bounded relative rounding error $\delta_k>-1$ for all $k\in\NN$.
\begin{proposition}
	\label{stepsize}	
	Let $q\in(0.5,1)$, $p=\frac{(1-q)}{d}$, $c_5>0$, and $\{\delta_k\}\subset[\underline{\delta},\overline{\delta}]$, where $-1<\underline{\delta}\leq\overline{\delta}<\infty$. By setting $\alpha_k=\frac{1}{k^p}$ and $\hat{\eta}_k=\frac{c_5(1+\delta_k)}{k^q}$ for $k\in\NN$,
	$\{\alpha_k\}$ is a non-increasing sequence, $\lim\limits_{k\rightarrow \infty}\alpha_k=0$, and \eqref{the_cond} holds. In addition, Assumption \ref{adaptive_stepsize}(3) is satisfied given the bound on $\Delta_k$ from Assumption \ref{delta_bound}.
\end{proposition}
\begin{proof}
	Setting $\alpha_k=\frac{1}{k^p}$, $\{\alpha_k\}$ is non-increasing with $\lim\limits_{k\rightarrow\infty}\alpha_k=0$ for $p>0$. The summation conditions \eqref{the_cond} hold when     
	\begin{alignat}{6}
		\sum_{k=1}^{\infty}\alpha_k^d\hat{\eta}_k&\geq c_5(1+\underline{\delta})\sum_{k=1}^{\infty}k^{-dp}k^{-q}=\infty\quad\text{and}\nonumber\\
		\sum_{k=1}^{\infty}\alpha_k^{d-1}\hat{\eta}^2_k&\leq c_5^2(1+\overline{\delta})^2 \sum_{k=1}^{\infty}k^{-(d-1)p}k^{-2q}<\infty,\nonumber
	\end{alignat}
	which is true when $dp+q\leq 1$ and $(d-1)p+2q>1$, which holds when $q\in(0.5,1)$ and $p=\frac{(1-q)}{d}$. Defining $\hat{q}:=2q-1>0$ and using Assumption \ref{delta_bound},  
	\begin{alignat}{6}
		&&\lim\limits_{k\rightarrow \infty}\Delta_k&\stackrel{\text{a.s.}}{\leq} c_4\frac{\hat{\eta}_k}{\alpha_k}\leq c_4c_5(1+\overline{\delta})k^{p-q}\leq c_4c_5(1+\overline{\delta})k^{1-2q}=c_4c_5(1+\overline{\delta})k^{-\hat{q}}=0,\nonumber
	\end{alignat}	
	considering $d=1$ for the third inequality, which satisfies Assumption \ref{adaptive_stepsize}(3). 
\end{proof}

\noindent Giving an asymptotic convergence result in Theorem \ref{convergence} in a setting largely motivated by finite precision arithmetic may seem contradictory, in particular, how $\lim\limits_{k\rightarrow \infty}\hat{\eta}_k=0$ in Proposition \ref{stepsize}. If we consider a sequence of fixed-point environments $\{\FF_{t_j}\}$ with increasing fractional digits $t_{j+1}>t_j$ for all $j\in\NN$, a schedule can be followed where $\FF_{t_1}$ is used for iterations $[1,\widehat{K}_1]$, $\FF_{t_2}$ for iterations $[\widehat{K}_1+1,\hat{K}_2]$, and so on for a predetermined sequence $\{\widehat{K}_j\}\subset \NN$, which would accommodate decreasing step sizes. This idea of increasing the number of fractional digits through time was successfully used in \cite[Figure 3]{gupta2015}, where neural network training in an $\FF_{12}$ was performed until stagnation occurred, after which the fractional digits were increased to $t=16$, resulting in a rapid accuracy improvement. Another, perhaps more practical approach is to consider a fixed $\alpha_k=\alpha>0$, allowing for a non-asymptotic convergence bound in expectation using a fixed $\hat{\eta}_k=\hat{\eta}>0$, which we now show for the $L_{\infty}$-norm Clarke $\alpha$-subdifferential, where the parameter $c_5>0$ can be used to account for rounding error, enabling $\hat{\eta}\in\FF$. 

\begin{corollary}
	\label{non-asy}
	For a $K\in\NN$, assume that PISGD \eqref{eq:1} is run for $\hat{k}\sim U([K-1]_0)$ iterations uniformly sampled over $[K-1]_0$, and that Assumption \ref{approx_assump} holds for all $k\in\NN$ with $\alpha_k=\alpha>0$. Step sizes  $\eta_k=\hat{\eta}\psi_k\geq0$ are used, where 
	$\hat{\eta}=\frac{c_5}{\sqrt{K}}$ for $c_5>0$, and Assumptions \ref{adaptive_stepsize}(1) and \ref{adaptive_stepsize}(2) hold for all $\psi_k$. Assume also that Assumptions \ref{e_assump} and \ref{delta_bound} hold for all $k\in\NN$, and that $K\geq\left(\frac{c_4c_5}{\alpha c_1\underline{\Psi}^U}\right)^2$. For $\hat{\bm{w}}:=\bm{w}^{\hat{k}+1}$,  
	\begin{alignat}{6}
		&&\EE[\dist(\bm{0},\partial^{\infty}_{\alpha}f(\hat{\bm{w}}))^2]&\leq \frac{\kappa_1f_{\alpha}(\bm{w}^1)}{\sqrt{K}}+\frac{\kappa_2Q}{\alpha \sqrt{K}}+\frac{\kappa_3\sqrt{d}L_0}{\alpha\sqrt{K}}((\overline{\Psi}^U)^2c_2Q+c_3),\nonumber
	\end{alignat}				
	where $\kappa_1:=\frac{2}{c_1c_5\underline{\Psi}^U}$, $\kappa_2:=\frac{c_2c_4c_5}{c_1\underline{\Psi}^U}$, and 
	$\kappa_3:=\frac{c_5}{c_1\underline{\Psi}^U}$. To guarantee that 
	\begin{alignat}{6}
		&&\EE[\dist(\bm{0},\partial^{\infty}_{\alpha}f(\hat{\bm{w}}))]&\leq \nu\nonumber	
	\end{alignat}
	for any $\nu>0$ requires $K=O\left(\alpha^{-2}\nu^{-4}\right)$.	
\end{corollary}
\begin{proof}	
	Taking the conditional expectation with respect to $\F_{k-1}$ of inequality \eqref{non-start} in the proof of Theorem \ref{convergence}, and simplifying the notation, letting $\alpha_k=\alpha$ and $\hat{\eta}_k=\hat{\eta}$, 
	\begin{alignat}{6}
		&&&\EE[f_{\alpha}(\bm{w}^{k+1})|\F_{k-1}]\nonumber\\
		&&\leq&f_{\alpha}(\bm{w}^k)-\hat{\eta}\EE[\psi_k \langle \nabla f_{\alpha}(\bm{w}^k),\widehat{\nabla} \overline{F}^k(\bm{w}^k)\rangle|\F_{k-1}]+\langle \nabla f_{\alpha}(\bm{w}^k),\EE[\hat{\bm{e}}^k|\F_{k-1}]\rangle\nonumber\\ 
		&&+&\frac{\sqrt{d}L_0}{2\alpha}(\hat{\eta}^2\EE[\psi^2_k\|\widehat{\nabla} \overline{F}^k(\bm{w}^k)\|^2_2|\F_{k-1}]-2\hat{\eta}\EE[\psi_k\langle \widehat{\nabla} \overline{F}^k(\bm{w}^k),\hat{\bm{e}}^k\rangle|\F_{k-1}]
		+\EE[\|\hat{\bm{e}}^k\|^2_2|\F_{k-1}])\nonumber\\
		&&\stackrel{\text{a.s.}}{\leq}&f_{\alpha}(\bm{w}^k)-\hat{\eta}\EE[\psi_k \langle \nabla f_{\alpha}(\bm{w}^k),\widehat{\nabla} \overline{F}^k(\bm{w}^k)\rangle|\F_{k-1}]+\frac{\hat{\eta}^2\sqrt{d}L_0}{2\alpha}((\overline{\Psi}^U)^2c_2Q+c_3)\nonumber\\
		&&\stackrel{\text{a.s.}}{\leq}&f_{\alpha}(\bm{w}^k)-\hat{\eta}
		(c_1\underline{\Psi}^U-\frac{\Delta_k}{2})\|\nabla f_{\alpha}(\bm{w}^k)\|^2_2	
		+\hat{\eta}\frac{\Delta_k}{2}c_2Q+\frac{\hat{\eta}^2\sqrt{d}L_0}{2\alpha}((\overline{\Psi}^U)^2c_2Q+c_3)\nonumber\\
		&&\stackrel{\text{a.s.}}{\leq}&f_{\alpha}(\bm{w}^k)-\hat{\eta}
		(c_1\underline{\Psi}^U-\frac{c_4\hat{\eta}}{2\alpha})\|\nabla f_{\alpha}(\bm{w}^k)\|^2_2	
		+\hat{\eta}\frac{c_4\hat{\eta}}{2\alpha}c_2Q+\frac{\hat{\eta}^2\sqrt{d}L_0}{2\alpha}((\overline{\Psi}^U)^2c_2Q+c_3)\nonumber\\
		&&=&f_{\alpha}(\bm{w}^k)-\frac{c_5}{\sqrt{K}}
		(c_1\underline{\Psi}^U-\frac{c_4c_5}{2\alpha\sqrt{K}})\|\nabla f_{\alpha}(\bm{w}^k)\|^2_2	
		+\frac{c_4c_5^2}{2\alpha K}c_2Q+\frac{c_5^2\sqrt{d}L_0}{2\alpha K}((\overline{\Psi}^U)^2c_2Q+c_3)\nonumber\\	
		&&\leq&f_{\alpha}(\bm{w}^k)-\frac{c_1c_5\underline{\Psi}^U}{2\sqrt{K}}\|\nabla f_{\alpha}(\bm{w}^k)\|^2_2	
		+\frac{c_4c_5^2}{2\alpha K}c_2Q+\frac{c_5^2\sqrt{d}L_0}{2\alpha K}((\overline{\Psi}^U)^2c_2Q+c_3),\nonumber	
	\end{alignat} 
	where the second inequality holds using the same simplifications used to get inequality \eqref{prerobbsieg2}, and the third inequality was shown as equality \eqref{pre_cross_term}, both in the proof of Theorem \ref{convergence}. The fourth inequality uses Assumption \ref{delta_bound}, and the last inequality holds using the assumption that $K\geq\left(\frac{c_4c_5}{\alpha c_1\underline{\Psi}^U}\right)^2$. Multiplying by $2\sqrt{K}(c_1c_5\underline{\Psi}^U)^{-1}$ and rearranging,
	\begin{alignat}{6}
		&&&\|\nabla f_{\alpha}(\bm{w}^k)\|^2_2\nonumber\\
		&&\stackrel{\text{a.s.}}{\leq}&\frac{2\sqrt{K}}{c_1c_5\underline{\Psi}^U}		(f_{\alpha}(\bm{w}^k)-\EE[f_{\alpha}(\bm{w}^{k+1})|\F_{k-1}])+\frac{c_2c_4c_5Q}{c_1\underline{\Psi}^U\alpha \sqrt{K}}+\frac{c_5\sqrt{d}L_0}{c_1\underline{\Psi}^U\alpha\sqrt{K}}((\overline{\Psi}^U)^2c_2Q+c_3).\nonumber
	\end{alignat} 
	Using $\kappa_1=\frac{2}{c_1c_5\underline{\Psi}^U}$, 	
	$\kappa_2=\frac{c_2c_4c_5}{c_1\underline{\Psi}^U}$, and 
	$\kappa_3=\frac{c_5}{c_1\underline{\Psi}^U}$,  
	taking the expectation, summing the inequalities over $k\in[K]$, and dividing by $K$, 	
	\begin{alignat}{6}
		&&\frac{1}{K}\sum_{k=1}^K\EE[\|\nabla f_{\alpha}(\bm{w}^k)\|^2_2]&\leq \frac{\kappa_1(f_{\alpha}(\bm{w}^1)-\EE[f_{\alpha}(\bm{w}^{K+1})])}{\sqrt{K}}+\frac{\kappa_2Q}{\alpha \sqrt{K}}+\frac{\kappa_3\sqrt{d}L_0}{\alpha \sqrt{K}}((\overline{\Psi}^U)^2c_2Q+c_3).\nonumber
	\end{alignat}
	Noting that $\EE[\|\nabla f_{\alpha}(\hat{\bm{w}})\|^2_2]=\frac{1}{K}\sum_{k=1}^K\EE[\|\nabla f_{\alpha}(\bm{w}^k)\|^2_2]$, $\dist(\bm{0},\partial^{\infty}_{\alpha}f(\hat{\bm{w}}))\leq \|\nabla f_{\alpha}(\hat{\bm{w}})\|_2$ from Proposition \ref{inclusion}, and that $\EE[f_{\alpha}(\bm{w}^{K+1})]\geq 0$ by Assumption \ref{pos_loss}, 
	\begin{alignat}{6}
		&&\EE[\dist(\bm{0},\partial^{\infty}_{\alpha}f(\hat{\bm{w}}))^2]\leq \EE[\|\nabla f_{\alpha}(\hat{\bm{w}})\|^2_2]&\leq \frac{\kappa_1f_{\alpha}(\bm{w}^1)}{\sqrt{K}}+\frac{\kappa_2Q}{\alpha \sqrt{K}}+\frac{\kappa_3\sqrt{d}L_0}{\alpha\sqrt{K}}((\overline{\Psi}^U)^2c_2Q+c_3).\nonumber
	\end{alignat}	
	Given that $\EE[\dist(\bm{0},\partial^{\infty}_{\alpha}f(\hat{\bm{w}}))]^2\leq\EE[\dist(\bm{0},\partial^{\infty}_{\alpha}f(\hat{\bm{w}}))^2]$ by Jensen's inequality, the requirement that 
	$\EE[\dist(\bm{0},\partial^{\infty}_{\alpha}f(\hat{\bm{w}}))]\leq \nu$ is satisfied when   
	\begin{alignat}{6}
		&&\frac{\kappa_1f_{\alpha}(\bm{w}^1)}{\sqrt{K}}+\frac{\kappa_2Q}{\alpha \sqrt{K}}+\frac{\kappa_3\sqrt{d}L_0}{\alpha\sqrt{K}}((\overline{\Psi}^U)^2c_2Q+c_3)	\leq\nu^2,\nonumber
	\end{alignat}
	which after rearranging requires that 
	\begin{alignat}{6}
		&&\frac{1}{\alpha^2\nu^4}\left(\alpha\kappa_1 f_{\alpha}(\bm{w}^1)+\kappa_2Q+\kappa_3\sqrt{d}L_0((\overline{\Psi}^U)^2c_2Q+c_3)\right)^2\leq K,\nonumber
	\end{alignat}
proving that $\EE[\dist(\bm{0},\partial^{\infty}_{\alpha}f(\hat{\bm{w}}))]\leq \nu$ can be guaranteed for a $K=O\left(\alpha^{-2}\nu^{-4}\right)$.
\end{proof}

\section{Numerical Demonstration of an Adaptive Step Size \& Empirical Verification of Assumption \ref{approx_assump}}
\label{empirical}

In this section we first develop and test an adaptive step size based on Assumption \ref{adaptive_stepsize} for fixed-point arithmetic environments. Two Resnet models are trained: Resnet 20 on CIFAR-10 (R20C10)$\label{r20}$ and Resnet 32 on CIFAR-100 (R32C100)$\label{r32}$. The experiments were conducted using QPyTorch \citep{zhang2019}, which enabled the simulation of training using fixed-point arithmetic with stochastic rounding, which is the rounding method of choice for lower-precision deep learning \citep{gupta2015,wang2022,yang2019}. 

\subsection{Restricted Gradient Normalization}
\label{GN}

As an example from the class of adaptive step sizes proposed in Section \ref{adapt_class}, Restricted Gradient Normalization (RGN) is presented in Algorithm \ref{alg1}.\footnote{When $k=1$, $\eta_1$ is set to $\hat{\eta_1}$.} To motivate this step size, we first consider the more common form of normalized SGD, $\eta_k=\hat{\eta}_k/g^k_{nrm}$
[\citealp[Equation 2.7]{shor1998}, \citealp[Section 3.2.3]{nesterov2004}], where as in Section \ref{adapt_class}, $\hat{\eta}_k$ is a deterministic step size. 

Given that it is unclear in general how to choose $\hat{\eta}_k$, we consider the quantity $\psi'_k:=\frac{m^{k-1}_{ave}}{g^{k}_{nrm}}$ in RGN, which is our intended value for $\psi_k$ before satisfying the conditions of Assumption \ref{adaptive_stepsize} and taking into account rounding error. The denominator $g^{k}_{nrm}:=\max(R(\|\widehat{\nabla} \overline{F}(\bm{w}^k)\|_1),\mu)$ is approximately equal to the norm of $\widehat{\nabla} \overline{F}(\bm{w}^k)$, where $\mu>0$ is a small positive constant to avoid division by $0$. The numerator, $m^{k-1}_{ave}:=R(\frac{1}{\min(k-1,c)}\sum_{i=\max(1,k-c)}^{k-1}g^i_{nrm})$, is the average of past values of $g^{k}_{nrm}$, where $c=10$ was used for all experiments.
	
If the norm of the gradient is larger (smaller) than the recent average, the step size decreases (increases), which is intended to stabilize the norm of the algorithm's updates $\|w^{k+1}-w^k\|_2$ through time. Assuming that $\EE[\psi_k]\approx 1$, the need to tune $\{\hat{\eta}_k\}$ can be avoided by setting it equal to what is commonly used for SGD, allowing for a clear comparison between (P)SGD with and without RGN. 

The quantity $m^{k-1}_{ave}/g^{k-1}_{nrm}$ is used to construct $\F_{k-1}$-measurable bounds $\Psi^L_k\geq 0$ and $\Psi^U_k>0$ to clip $\psi'_k$. Assuming that $\psi'_k$ is unimodal and symmetric about $m^{k-1}_{ave}/g^{k-1}_{nrm}$, the values of $\Psi^L_k$ and $\Psi^U_k$, which are chosen as evenly and as far apart as possible from $m^{k-1}_{ave}/g^{k-1}_{nrm}$, minimize the probability of clipping $\psi'_k$.

Higher accuracy in our experiments was found by using the L1-norm when computing $g^{k}_{nrm}$ and a simple moving average when computing $m^{k-1}_{ave}$ compared to using the L2-norm and an exponential moving average with a weight parameter equal to $R(0.1)$. Our reasoning for this is that computationally simpler operations are in general less negatively affected by rounding error.

\begin{algorithm}
	\caption{RGN: Restricted Gradient Normalization for $k>1$} 
	\begin{algorithmic}
		\STATE {\bfseries Input:} $\widehat{\nabla} \overline{F}(\bm{w}^k)\in \FF^d$; $\{g^{i}_{nrm}\}_{i=\max(1,k-c)}^{k-1}\subset \FF_{>0}$; $\hat{\eta}_k,\mu\in \FF_{>0}$; $\frac{\Delta_k}{2}\in \RR_{\geq 0}$	
		\STATE $g^k_{nrm}=\max(R(\|\widehat{\nabla} \overline{F}(\bm{w}^k)\|_1),\mu)$
		\STATE $m^{k-1}_{ave}=R(\frac{1}{\min(k-1,c)}\sum_{i=\max(1,k-c)}^{k-1}g^i_{nrm})$	
		\STATE $\upsilon_k=\min(\frac{\Delta_k}{2},\frac{m^{k-1}_{ave}}{g^{k-1}_{nrm}})$
		\STATE $\Psi^L_k=\frac{m^{k-1}_{ave}}{g^{k-1}_{nrm}}-\upsilon_k$
		\STATE $\Psi^U_k=\frac{m^{k-1}_{ave}}{g^{k-1}_{nrm}}+\Delta_k-\upsilon_k$
		\STATE $\psi_k=\min(\max(\Psi^L_k,\frac{m^{k-1}_{ave}}{g^{k}_{nrm}}),\Psi^U_k)$
		\STATE {\bfseries Output:} $R(\hat{\eta}_k*\psi_k)$
	\end{algorithmic}
	\label{alg1}
\end{algorithm} 

\subsection{Stabilizing Training in Fixed-Points Environments}
\label{exp1}

We test if the algorithm steps \eqref{eq:1} with numerical error can be stabilized using our proposed adaptive step sizes. For all experiments training was done for 200 epochs, with an initial step size of $\hat{\eta}_k=0.1$ which was divided by 10 after 100 epochs, using a mini-batch size of $M=128$, following the original Resnet paper and what is used in practice \cite{he2016,idel18}.\footnote{Dividing the step size again at the $150^{th}$ epoch had an unobservable effect.}  

Our version of Gradient Normalization$\label{N_Grad}$ (GN) is tested, with rounded step size  $R(\eta_k)=R(\hat{\eta}_k*\frac{m^{k-1}_{ave}}{g^{k}_{nrm}})$, which occurs when $\Delta_k\geq 2\Lambda^+/\lambda$ in Algorithm \ref{alg1}, with no clipping occurring when computing $\hat{\psi}_k$.\footnote{Given that $m^{k-1}_{ave}, g^{k-1}_{nrm}\in \FF_{>0}$, $\frac{m^{k-1}_{ave}}{g^{k-1}_{nrm}}\leq \Lambda^+/\lambda$.} In the implementation of GN, only three rounding operations are performed to compute $g^k_{nrm}$, $m^{k-1}_{ave}$, and $R(\eta_k)$. This implicitly assumes that intermediate steps are stored in sufficiently high precision such that no additional rounding errors are observable in the final output. This choice is consistent with the implementation of rounding using QPyTorch, where a quantization layer is added after each neural network layer. 

\begin{figure}[t]
	\centering
	\pgfplotsset{width=12cm,height=3.5cm,compat=1.3}
	\begin{tikzpicture}
		\pgfplotsset{scale only axis}
		\begin{axis}[
			axis y line*=left,axis x line*=bottom,xmin=10,xmax=201,ymin=0.82,ymax=0.88,
			xtick={0,20,...,200},
			ytick={0,0.02,...,0.9},
			xlabel=epochs,
			ylabel=test set accuracy,
			y label style={at={(axis description cs:-0.065,.5)},anchor=south},
			x label style={at={(axis description cs:.5,-0.3)},anchor=south}]
			\addplot[line width=2pt,draw=black]
			table[x=x,y=y-test]
			{FI-CF10-sto-FF.dat};\label{plot_five}		
			\addplot[line width=2pt,draw=cyan]
			table[x=x,y=y-test]
			{FI-CF10-sto-TF.dat};\label{plot_seven}
			\addplot[line width=2pt,draw=orange]
			table[x=x,y=y-test]
			{FI-CF10-sto-TT.dat};\label{plot_twelve}
			\addplot[line width=1pt,draw=black]
			table[x=x,y=y-test]
			{FI-CF10-sto-FF-min.dat};		
			\addplot[line width=1pt,draw=black,dotted]
			table[x=x,y=y-test]
			{FI-CF10-sto-FF-max.dat};	
			\addplot[line width=1pt,draw=cyan]
			table[x=x,y=y-test]
			{FI-CF10-sto-TF-min.dat};
			\addplot[line width=1pt,draw=orange]
			table[x=x,y=y-test]
			{FI-CF10-sto-TT-min.dat};
			\addplot[line width=1pt,draw=cyan,dotted]
			table[x=x,y=y-test]
			{FI-CF10-sto-TF-max.dat};
			\addplot[line width=1pt,draw=orange,dotted]
			table[x=x,y=y-test]
			{FI-CF10-sto-TT-max.dat};			
		\end{axis}
		\matrix[matrix of nodes,anchor=west,xshift=2.7cm,yshift=-1.5cm,inner sep=0.2em,draw,
		column 2/.style={anchor=base west},
		column 4/.style={anchor=base west}]
		{\ref{plot_five}&SGD&[1pt]&\ref{plot_seven}&PISGD&[1pt]&\ref{plot_twelve}&PNSGD\\};
	\end{tikzpicture}
	\begin{tikzpicture}
		\pgfplotsset{scale only axis}
		\begin{axis}[
			axis y line*=left,axis x line*=bottom,xmin=10,xmax=201,ymin=0.55,ymax=0.63,
			xtick={0,20,...,200},
			ytick={0.01,0.03,...,0.65},
			xlabel=epochs,
			ylabel=test set accuracy,
			y label style={at={(axis description cs:-0.065,.5)},anchor=south},
			x label style={at={(axis description cs:.5,-0.3)},anchor=south}]
			\addplot[line width=2pt,draw=black]
			table[x=x,y=y-test]
			{FI-CF100-sto-FF.dat};
			\addplot[line width=2pt,draw=orange]
			table[x=x,y=y-test]
			{FI-CF100-sto-TT.dat};
			\addplot[line width=2pt,draw=cyan]
			table[x=x,y=y-test]
			{FI-CF100-sto-TF.dat};
			\addplot[line width=1pt,draw=black]
			table[x=x,y=y-test]
			{FI-CF100-sto-FF-min.dat};		
			\addplot[line width=1pt,draw=black,dotted]
			table[x=x,y=y-test]
			{FI-CF100-sto-FF-max.dat};	
			\addplot[line width=1pt,draw=orange]
			table[x=x,y=y-test]
			{FI-CF100-sto-TT-min.dat};
			\addplot[line width=1pt,draw=cyan]
			table[x=x,y=y-test]
			{FI-CF100-sto-TF-min.dat};
			\addplot[line width=1pt,draw=orange,dotted]
			table[x=x,y=y-test]
			{FI-CF100-sto-TT-max.dat};
			\addplot[line width=1pt,draw=cyan,dotted]
			table[x=x,y=y-test]
			{FI-CF100-sto-TF-max.dat};	
		\end{axis}
	\end{tikzpicture}
	\caption{(Section \ref{exp1}) Plots of SGD, PISGD, and PNSGD. The mean (thick solid), minimum (thin solid), and maximum (dotted) test set accuracy for R20C10 in $\FF_{15/20}$ (top), and R32C100 in $\FF_{17/24}$ (bottom) over 10 runs.} \label{T8}
\end{figure}

Let $\FF_{X/Y}\label{fi-ex}$ denote an $\FF$ with $\beta=2$, using X fractional bits and Y bits in total. Our use of QPyTorch followed closely the CIFAR10 Low Precision Training Example \citep{qdocu}. All weight and gradient rounding is done into $\FF_{X/Y}$, stochastic rounding is used throughout, no gradient accumulator is used, no gradient scaling is performed, and batch statistics are used to calculate the mean and variance for batch normalization. To determine the appropriate ratio of fractional bits, we were guided by the results of \citep{gupta2015}, and experimented with a majority of bits being fractional, given that in their experiments with $\FF_{X/16}$, the best accuracy occurred with X=14, with further improvement using $\FF_{16/20}$ \citep[Figures 1, 2, \& 3]{gupta2015}. The choice of the fixed-point environment $\FF_{X/Y}$ in each experiment was determined by finding the smallest Y which did not result in all algorithms collapsing to random guessing.

Let PNSGD$\label{SGD_V}$ denote PISGD with GN using $\alpha_k=0.05\hat{\eta}_k$. We plot the test set accuracy through time  for two experiments in Figure \ref{T8}: R20C10 in $\FF_{15/20}$ and R32C100 in $\FF_{17/24}$. In particular, the mean, minimum, and maximum accuracy over 10 runs are plotted. We observe that PNSGD is equal to or greater than SGD and PISGD in terms of the mean, minimum, and maximum accuracy. For the minimum accuracy, which we use as a measure of stability, PNSGD outperforms SGD and PISGD. We conclude that simple adaptive step sizes, without the need for any fine-tuning, can have a stabilizing effect on PISGD, making its training more robust to numerical error. 

Momentum and weight decay are typically used when training Resnet models \cite{he2016,idel18}. Compared to SGD, momentum requires storing another $d$-sized vector. Considering the GPU memory used by SGD to store model weights and gradients, by having to also store a momentum vector, the required GPU memory will increase by $50\%$. In settings with limited GPU memory, it may be more effective to allocate this memory to increasing the number of bits used in $\FF$, assuming these techniques increase accuracy when numerical error is present. Experiments were performed using momentum and weight decay with parameter values $R(0.9)$ and $R(1E-4)$ following \cite{he2016,idel18}. For R20C10, this resulted in all 10 runs collapsing to random guessing, with a final test set accuracy ranging within $[0.091,0.105]$. For R32C100, 3 runs collapsed to random guessing, with a final average test set accuracy of $0.426$, which is still significantly less than the final accuracy of $0.589$ using SGD (Figure \ref{T8}). This gives further evidence that perhaps ``simpler is better" when it comes to training with numerical error.

\subsection{Practical Usage of Assumption \ref{approx_assump}}
\label{testing_assum}

This section concludes by showing how Assumption \ref{approx_assump} can be verified empirically, and more precisely \eqref{errorineq}, given that \eqref{errorupper} trivially holds in finite-precision environments. Even though Assumption \ref{approx_assump} encodes the fundamental requirement that the algorithm moves in a direction of descent in expectation, it is still only a sufficient condition, as the algorithm could still converge even if \eqref{errorineq} does not hold for every $k\geq K$. 
Instead of trying to choose $\FF$ which would guarantee \eqref{errorineq}, a perhaps more practical approach is to view \eqref{errorineq} as a diagnostic tool, in the sense that if the algorithm is not converging as desired, \eqref{errorineq} could be empirically tested to see if the numerical precision should be increased, instead of, for example, adjusting the step or batch size.

In order to test this idea, the first 100 steps of the first run of the R20C10 experiments using PISGD was repeated using the same fixed-point environment $\FF_{15/20}$. In addition to the fixed-point model and its approximate stochastic gradient $\widehat{\nabla} F$, an FP32 model was stored from which $\widetilde{\nabla} F$ was computed. After each training step, using the entire training set, the dot products
\begin{figure}[t]
	\centering
	\pgfplotsset{width=12cm,height=3.5cm,compat=1.3}
	\begin{tikzpicture}
		\pgfplotsset{scale only axis}
		\begin{axis}[
			axis y line*=left,axis x line*=bottom,xmin=0,xmax=101,ymin=-0.20,ymax=1.05,
			xtick={0,10,...,100},
			ytick={-0.20,0,...,1},
			xlabel=training steps,
			ylabel=$\hat{c}_1$,
			y label style={at={(axis description cs:-0.065,.5)},anchor=south},
			x label style={at={(axis description cs:.5,-0.3)},anchor=south}]
			\addplot[line width=1.5pt,draw=black]
			table[x=x,y=y]
			{FI-20-15.dat};\label{plot_20}		
			\addplot[line width=1.5pt,draw=orange]
			table[x=x,y=y]
			{FI-flat.dat};\label{plot_21}		
		\end{axis}
	\end{tikzpicture}
	\caption{For the first run of the R20C10 experiments, $\hat{c}^k_1$ was computed for the first 100 training steps, such that $\langle \EE[\widehat{\nabla} F(\bm{w}^k+\hat{\bm{u}}^k,\bm{\xi},\bm{b}^k)],\nabla f_{\alpha_k}(\bm{w}^k)\rangle=\hat{c}^k_1\|\nabla f_{\alpha_k}(\bm{w}^k)\|^2_2$, to empirically verify if Assumption \ref{approx_assump} holds.} \label{F8}
\end{figure}

\begin{alignat}{6}
&\langle \EE[\widehat{\nabla} F(\bm{w}^k+\hat{\bm{u}}^k,\bm{\xi},\bm{b}^k)],\nabla f_{\alpha_k}(\bm{w}^k)\rangle\nonumber\\
=&\langle \frac{1}{N_T}\sum_{i=1}^{N_T}\widehat{\nabla} F(\bm{w}^k+\hat{\bm{u}}^k,\bm{\xi}^i,\bm{b}^k)],
\frac{1}{N_T}\sum_{i=1}^{N_T}\widetilde{\nabla} F(\bm{w}^k+\bm{u}^k,\bm{\xi}^i)\rangle\quad\text{and}\nonumber\\
& \|\nabla f_{\alpha_k}(\bm{w}^k)\|^2_2\nonumber\\
=&\langle \frac{1}{N_T}\sum_{i=1}^{N_T}\widetilde{\nabla} F(\bm{w}^k+\bm{u}^k,\bm{\xi}^i),
\frac{1}{N_T}\sum_{i=1}^{N_T}\widetilde{\nabla} F(\bm{w}^k+\bm{u}^k,\bm{\xi}^i)\rangle\nonumber 	
\end{alignat}
were computed, where $N_T=50,000$ is the size of the CIFAR-10 training dataset, from which the maximum value $\hat{c}^k_1$ was computed such that \eqref{errorineq} holds for $\bm{w}^k$. The values $\{\hat{c}^k_1\}\subset [-0.1383, 1.0382]$ are plotted in Figure \ref{F8}. Their mean value is $0.4522$, with 97 of the 100 being positive. 

We note that it is not always practical or even possible to do an exact expectation over the entire training set as described. To verify the gradient accuracy for a chosen $\FF$, the same approach could also be done for a large independently identically distributed sample of data points $\{\bm{\xi}^i\}$. We refer readers to \cite[Chapter 5]{lect_sp} for the theoretical analysis of estimating $\langle \EE[\widehat{\nabla} F(\bm{w}^k+\hat{\bm{u}}^k,\bm{\xi},\bm{b}^k)],\nabla f_{\alpha_k}(\bm{w}^k)\rangle$, $ \|\nabla f_{\alpha_k}(\bm{w}^k)\|^2_2$, and hence $\hat{c}^k_1$, using the sample average approximation approach.    

\section{Conclusion}
\label{con}
This paper studied the theoretical and empirical convergence of variants of SGD using adaptive step sizes with numerical error. A new asymptotic convergence result to a Clarke stationary point, as well as the non-asymptotic convergence to an approximate stationary point in expectation were presented for perturbed iterate SGD with adaptive step sizes, applied to a stochastic Lipschitz continuous loss function with error in computing its stochastic gradient, as well as the SGD step itself. Numerical experiments were performed where evidence was found that the type of adaptive step sizes considered in this work can stabilize neural network training in the presence of numerical error.

%

\bibliographystyle{tfs}
\bibliography{low_precision_oms}

\appendix
\section{Table of Notation}
\label{notation}
\setlength{\LTcapwidth}{\textwidth}
\begin{longtable}{llc}		
	\caption{Table of notation divided by section.}
	\label{t:9}	
	\endfirsthead
	\hline
	{\bf Symbol}&{\bf Description}&{\bf Page}\\		
	\hline				
	Section \ref{int}&&\\
	\hline	
	$f$&Loss function&\pageref{eq:0}\\		
	$F$&Stochastic loss function&\pageref{F}\\
	$\bm{\xi}$&Random vector argument of $F$&\pageref{xi}\\	
	$\bm{w}$&Decision variables of $f$&\pageref{w}\\		
	\hline		
	Section \ref{LipFunc}&&\\
	\hline				
	$L_0(\bm{\xi})$&Lipschitz constant of $F(\cdot,\bm{\xi})$ for almost all $\bm{\xi}$&\pageref{L0}\\	
	$Q$&$Q:=\EE[L_0(\bm{\xi})^2]$&\pageref{Q}\\
	$L_0$&$L_0:=\EE[L_0(\bm{\xi})]$&\pageref{EL0}\\		
	$B^p_{\epsilon}(\bm{w})$&$p$-norm $\epsilon$-closed ball centered at $\bm{w}$&\pageref{p-closed}\\	
	$B^p_{\epsilon}$&$B^p_{\epsilon}:=B^p_{\epsilon}(\bm{0})$&\pageref{p-closed-0}\\
	$\partial h$&Clarke subdifferential of a function $h$&\pageref{Clarke}\\
	$\partial^p_{\epsilon} h$&$p$-norm Clarke $\epsilon$-subdifferential of a function $h$&\pageref{p-n-e-Clarke}\\
	$\widetilde{\nabla} F$&Function equal to $\nabla F$ almost everywhere it exists&\pageref{approx_grad}\\		
	$\bm{u}$&Random vector uniformly distributed over $B^{\infty}_{\alpha}$&\pageref{uni-u}\\	
	$\alpha$&Radius of ball that $\bm{u}$ is sampled from&\pageref{alpha}\\	
	$f_{\alpha}$&$f_{\alpha}:=\EE[f(\cdot+\bm{u})]$&\pageref{f-alpha}\\	
	$L_1^{\alpha}$&Lipschitz constant of gradient of $f_{\alpha}$&\pageref{f-alpha-Lip}\\		
	\hline		
	Section \ref{finiteprec}&&\\
	\hline		
	$\FF$&A fixed-point arithmetic environment&\pageref{fi-e}\\
	$[n]_m$&$[n]_m:=[m,...,n]$&\pageref{int-seq}\\
	$[n]$&$[n]:=[n]_1$&\pageref{int-seq-1}\\
	$r$&Number of integer digits of a fixed-point number&\pageref{fi-r}\\
	$t$&Number of fractional digits of a fixed-point number&\pageref{fi-t}\\	
	$\beta$&Base of $\FF$&\pageref{base}\\				
	$d_i$&Value of the $i^{\text{th}}$ fractional digit of a fixed-point number&\pageref{fixed-frac}\\			
	$e_i$&Value of the $i^{\text{th}}$ integer digit of a fixed-point number&\pageref{fixed-int}\\		
	$\Lambda^-$&Smallest representable number in $\FF$&\pageref{f-s}\\
	$\lambda$&Smallest positive representable number in $\FF$&\pageref{f-sp}\\
	$\Lambda^+$&Largest representable number in $\FF$&\pageref{f-l}\\
	$\R_{\FF}$&$\R_{\FF}:=\{x\in\RR: \Lambda^-\leq x\leq \Lambda^+\}$&\pageref{range_FF}\\
	$\lfloor x\rfloor_{\FF}$&$\lfloor x\rfloor_{\FF}:=\max\{y\in \FF: y\leq x\}$&\pageref{x-floor}\\
	$\lceil x\rceil_{\FF}$&$\lceil x\rceil_{\FF}:=\min\{y\in \FF: y\geq x\}$&\pageref{x-ceil}\\ 
	$R$&Round to nearest or stochastic rounding&\pageref{x-round}\\
	\hline
	Section \ref{perturbedSGD}&&\\
	\hline	
	PISGD&Perturbed Ierate SGD&\pageref{eq:11}\\
	$\eta_k$&Step size of PISGD in the $k^{\text{th}}$ iteration&\pageref{ss}\\		
	$\hat{\eta}_k$& Deterministic component of $\eta_k$&\pageref{ss}\\
	$\psi_k$&Stochastic component of $\eta_k$&\pageref{ss}\\		
	$M$&Mini-batch size of PISGD&\pageref{MB}\\	
	$\widehat{\nabla} F$&An approximation of $\widetilde{\nabla} F$ due to numerical error&\pageref{asgrad}\\
	$\bm{b}$&Discrete random vector for computing $\widehat{\nabla} F$ using stochastic rounding&\pageref{b_sr}\\
	$\hat{\bm{u}}^k$& An approximation of $\bm{u}^k$ due to numerical error&\pageref{u-approx}\\
	$\hat{\bm{e}}^k$&Random vector modelling the error in computing a step of PISGD&\pageref{e-error}\\
	$\F_k$&$\F_k:=\sigma(\hat{\bm{u}}^j,\{\bm{\xi}^{j,i}\},\{\bm{b}^{j,i}\},\psi_j,\hat{\bm{e}}^j: j\in[k])$&\pageref{F-filt}\\ 
	$\G_k$&$\G_k:=\sigma(\hat{\bm{u}}^k,\{\bm{\xi}^{k,i}\},\{\bm{b}^{k,i}\},\psi_k)$&\pageref{G-sig}\\	
	$S^k$&$S^k:=\{\bm{w}^k,\hat{\eta}_k,\psi_k,M,\{\widehat{\nabla} F(\bm{w}^k+\hat{\bm{u}}^k,\bm{\xi}^{k,i},\bm{b}^{k,i})\}\}$&\pageref{Sk}\\		
	$\widehat{P}^k$&Distribution of $\hat{\bm{u}}$&\pageref{P-u}\\
	$V^k$&Support of random vector $\bm{b}^k$&\pageref{V-b}\\	
	$\widehat{\nabla} F^{k,i}(\bm{w}^k)$&$\widehat{\nabla} F^{k,i}(\bm{w}^k):=\widehat{\nabla} F(\bm{w}^k+\hat{\bm{u}}^k,\bm{\xi}^{k,i},\bm{b}^{k,i})$&\pageref{F-tilde-sim}\\
	$\widehat{\nabla} \overline{F}^k(\bm{w}^k)$&$\widehat{\nabla} \overline{F}^k(\bm{w}^k):=\frac{1}{M}\sum_{i=1}^M\widehat{\nabla} F^{k,i}(\bm{w}^k)$&\pageref{F-tilde-sim2}\\	
	$\Psi^L_k,\Psi^U_k$&$\PP(\Psi^L_k\leq\psi_k\leq\Psi^U_k|\F_{k-1})\stackrel{\text{a.s.}}{=}1$&\pageref{ss-bound}\\
	$\underline{\Psi}^U,\overline{\Psi}^U$&$\PP(\underline{\Psi}^U\leq\Psi^U_k\leq\overline{\Psi}^U)=1$&\pageref{ss-bound-bound}\\
	$\Delta_k$&$\Delta_k:=\Psi^U_k-\Psi^L_k$&\pageref{ss-Del}\\	
	\hline
	Section \ref{empirical}&&\\
	\hline
	R20C10&Resnet 20  trained on CIFAR-10&\pageref{r20}\\
	R32C100&Resnet 32 trained on CIFAR-100&\pageref{r32}\\	
	$\FF_{X/Y}$&$\FF$ with X fractional digits, Y digits in total, using stochastic rounding&\pageref{fi-ex}\\
	RGN&Restricted Gradient Normalization described in Algorithm \ref{alg1}&\pageref{alg1}\\
	GN&Gradient Normalization&\pageref{N_Grad}\\
	PNSGD&PISGD with GN&\pageref{SGD_V}\\		
	\hline	
\end{longtable}

\end{document}